 \newtheorem{thm}{Theorem}[section]
 \newtheorem{cor}[thm]{Corollary}
 \newtheorem{lem}[thm]{Lemma}
 \newtheorem{prop}[thm]{Proposition}
 \theoremstyle{definition}
 \newtheorem{defn}[thm]{Definition}
 \theoremstyle{remark}
 \newtheorem{rem}[thm]{Remark}
 \numberwithin{equation}{section}
\begin{document}

\title[Some interpolation inequalities]
{Some interpolation inequalities in Lorentz, Morrey and BMO spaces}
\author{Hua Wang}
\address{School of Mathematics and Systems Science,\\
Xinjiang University,\\
Urumqi 830046, P. R. China}

\email{wanghua@pku.edu.cn.}

\subjclass{42B35; 46E30}

\keywords{Lorentz space; Morrey space; BMO space; interpolation inequalities; bilinear estimates}

\date{\today}

\dedicatory{In memory of Li Xue.}

\begin{abstract}
In this paper, the author establishes some interpolation results between Lorentz, Morrey and BMO spaces. Let $1<p<\infty$ and $p\leq r\leq\infty$. It is proved that the space $L^{p,r}(\mathbb R^n)\cap\mathrm{BMO}(\mathbb R^n)$ is continuously embedded into $L^q(\mathbb R^n)$ for all $q$ with $p<q<\infty$, where $L^{p,r}(\mathbb R^n)$ denotes the classical Lorentz space with indices $p$ and $r$. Moreover, the author establishes the optimal growth rate of this embedding constant as $q\to\infty$. Based on Morrey spaces, the author introduces a new family of function spaces called Lorentz--Morrey spaces $LM^{p,r;\kappa}(\mathbb R^n)$ with indices $p$, $r$ and $\kappa$, and then shows that the space $LM^{p,r;\kappa}(\mathbb R^n)\cap \mathrm{BMO}(\mathbb R^n)$ is continuously embedded into $L^{q;\kappa}(\mathbb R^n)$ for all $q$ with $p<q<\infty$, where $1<p<\infty$, $p\leq r\leq\infty$ and $0<\kappa<1$. Furthermore, the asymptotically optimal growth order of this embedding constant is also established. As an application of the above interpolation results, some new bilinear estimates in the setting of Lorentz and Lorentz--Morrey spaces are also obtained, which can be used in the study of the global existence and regularity of weak solutions to elliptic and parabolic partial differential equations of the second order.
\end{abstract}

\maketitle

\section{Introduction and preliminaries}
In this paper, we are concerned with the theory of real interpolation in Lorentz, Morrey and BMO spaces. We write $\mathbb{N}=\{1,2,3,\dots\}$ for the set of natural numbers. Let $n\in \mathbb{N}$ and $\mathbb R^n$ be the $n$-dimensional Euclidean space with the Lebesgue measure $dx$. The Euclidean norm of $x\in\mathbb R^n$ is denoted by $|x|$. Throughout this paper, the letter $C$ always denotes a positive constant independent of the main parameters involved, but it may be different from line to line. We use the symbol $\mathbf{A}\lesssim \mathbf{B}$ to denote that there exists a constant $C>0$ such that $\mathbf{A}\leq C\cdot\mathbf{B}$. We also use $\mathbf{A}\approx \mathbf{B}$ to denote the equivalence of $\mathbf{A}$ and $\mathbf{B}$, that is, there exist two positive constants $C_1$, $C_2$ independent of $\mathbf{A}$ and $\mathbf{B}$ such that $C_1\cdot \mathbf{A}\leq \mathbf{B}\leq C_2\cdot\mathbf{A}$. For any given normed spaces $\mathcal{X}$ and $\mathcal{Y}$, with the (quasi-)norms $\|\cdot\|_{\mathcal{X}}$ and $\|\cdot\|_{\mathcal{Y}}$, respectively, the symbol $\mathcal{X}\hookrightarrow\mathcal{Y}$ means that the normed space $\mathcal{X}$ is continuously embedded into $\mathcal{Y}$, that is to say, for any $f\in \mathcal{X}$, then $f\in \mathcal{Y}$ and $\|f\|_{\mathcal{Y}}\lesssim\|f\|_{\mathcal{X}}$ with the implicit positive constant independent of $f$. Let $1<p<\infty$, $p\leq r\leq\infty$ and $L^{p,r}(\mathbb R^n)$ be the classical Lorentz space with indices $p$ and $r$. We shall prove the following interpolation results:
\begin{enumerate}
  \item $L^{p,r}(\mathbb R^n)\cap L^{\infty}(\mathbb R^n)\hookrightarrow L^q(\mathbb R^n)$ for all $q$ with $p<q<\infty$. Moreover, this embedding constant is shown to be 2 when $p\leq r<\infty$(with the asymptotically constant 2 when $r=\infty$).
  \item $L^{p,r}(\mathbb R^n)\cap \mathrm{BMO}(\mathbb R^n)\hookrightarrow L^q(\mathbb R^n)$ for all $q$ with $p<q<\infty$. Moreover, the optimal growth rate of this embedding constant is given, as $q\to\infty$.
\end{enumerate}
Actually, we can prove the second result with $\mathrm{BMO}$ replaced by $\mathcal{W}$, where the function space $\mathcal{W}$ is defined as the set of all locally integrable functions $f$ for which
\begin{equation*}
\|f\|_{\mathcal{W}}=\sup_{t>0}\big[f^{**}(t)-f^{*}(t)\big]<+\infty.
\end{equation*}
Here $f^{*}$ and $f^{**}$ denote the non-increasing rearrangement of $f$ and the average function of $f^{*}$, respectively. The set $\mathcal{W}$ was first introduced by Bennett, DeVore and Sharpley in \cite{be1}(see also Bennett--Sharpley in \cite{be2}, and Kozono--Wadade in \cite{Kozono2}). Moreover, it was also shown in \cite{Kozono2} that the following estimate
\begin{equation*}
\|f\|_{\mathcal{W}}\leq C(n)\|f\|_{\mathrm{BMO}}
\end{equation*}
holds for all $f\in \mathrm{BMO}(\mathbb R^n)$, where $C(n)$ is a positive constant depending only on the dimension $n$. From this, the desired result follows.

Given a Lebesgue measurable set $E\subset\mathbb R^n$, we denote the $n$-dimensional Lebesgue measure of $E$ by $m(E)$ and the
characteristic function of $E$ by $\mathbf{1}_E$. Let $1\leq p<\infty$, $1\leq r\leq\infty$ and $0<\kappa<1$. We introduce a new class of function spaces called Lorentz--Morrey spaces $LM^{p,r;\kappa}(\mathbb R^n)$. We define the functional
\begin{equation*}
\|f\|_{LM^{p,r;\kappa}}=\sup_{B\subset\mathbb R^n}\frac{1}{m(B)^{\kappa/p}}\|f\|_{L^{p,r}(B)},
\end{equation*}
where the supremum is taken over all balls $B$ in $\mathbb R^n$ and
\begin{equation*}
\|f\|_{L^{p,r}(B)}=
\begin{cases}
\displaystyle\bigg(\int_0^{\infty}\Big[s^{1/p}f^{*}_{B}(s)\Big]^r\frac{ds}{s}\bigg)^{1/r}, &\mbox{if}~ 1\leq r<\infty,\\
\displaystyle\sup_{s>0}\Big[s^{1/p}f^{*}_{B}(s)\Big], &\mbox{if}~ r=\infty.
\end{cases}
\end{equation*}
Here $f^{*}_{B}(s)=(f\cdot\mathbf{1}_{B})^{*}(s)$. The Lorentz--Morrey space $LM^{p,r;\kappa}(\mathbb R^n)$ is defined to be the set of all measurable functions $f$ on $\mathbb R^n$ for which $\|f\|_{LM^{p,r;\kappa}}<+\infty$.

Let $1<p<\infty$, $p\leq r\leq\infty$ and $0<\kappa<1$. We shall also show the following interpolation results:
\begin{enumerate}
  \item $LM^{p,r;\kappa}(\mathbb R^n)\cap L^{\infty}(\mathbb R^n)\hookrightarrow L^{q;\kappa}(\mathbb R^n)$ for all $q$ with $p<q<\infty$. Moreover, this embedding constant is equal to 2 when $p\leq r<\infty$(with the asymptotically constant 2 when $r=\infty$).
  \item $LM^{p,r;\kappa}\cap \mathrm{BMO}(\mathbb R^n)\hookrightarrow L^{q;\kappa}(\mathbb R^n)$ for all $q$ with $p<q<\infty$. Moreover, this embedding constant is shown to have the linear growth order as $q\to\infty$.
\end{enumerate}
For any measurable set $E\subseteq\mathbb R^n$ and any given $p\in[1,\infty]$, the Lebesgue space $L^p(E)$ is defined to be the set of all measurable functions $f$ on $E$ such that $\|f\|_{L^p(E)}<+\infty$, where
\begin{equation*}
\|f\|_{L^p(E)}:=
\begin{cases}
\displaystyle\bigg(\int_{E}|f(x)|^p\,dx\bigg)^{1/p}, &\mbox{if}~ p\in[1,\infty),\\
\underset{x\in E}{\mbox{ess\,sup}}\,|f(x)|, &\mbox{if}~ p=\infty.
\end{cases}
\end{equation*}
We also denote by $WL^{p}(E)$($1\leq p<\infty$) the weak Lebesgue space consisting of all measurable functions $f$ on $E$ such that
\begin{equation*}
\|f\|_{WL^{p}(E)}:=
\sup_{\lambda>0}\lambda\cdot\Big[m\big(\big\{x\in E:|f(x)|>\lambda\big\}\big)\Big]^{1/p}<+\infty.
\end{equation*}
When $E=\mathbb R^n$, we write the norms of $L^p(\mathbb R^n)$ and $WL^{p}(\mathbb R^n)$ simply by $\|\cdot\|_{L^p}$ and $\|\cdot\|_{WL^{p}}$, respectively. The space of functions with bounded mean oscillation (BMO) plays a crucial role in modern harmonic analysis, see, for example,
\cite[Chapter 6]{duoand}, \cite[Chapter 3]{Grafakos} and \cite[Chapter 4]{stein2}. A locally integrable function $f$ on $\mathbb R^n$ is said to be in $\mathrm{BMO}(\mathbb R^n)$, if for any ball $B\subset\mathbb R^n$,
\begin{equation*}
\frac{1}{m(B)}\int_{B}|f(x)-f_{B}|\,dx<+\infty,
\end{equation*}
where $f_{B}$ denotes the average value of $f$ on the ball $B$, that is,
\begin{equation*}
f_{B}:=\frac{1}{m(B)}\int_{B}f(y)\,dy.
\end{equation*}
The $\mathrm{BMO}$ norm of $f$ is defined by
\begin{equation*}
\|f\|_{\mathrm{BMO}}:=\sup_{B\subset\mathbb R^n}\frac{1}{m(B)}\int_{B}|f(x)-f_{B}|\,dx,
\end{equation*}
where the supremum is taken over all balls $B$ in $\mathbb R^n$. The space $\mathrm{BMO}(\mathbb R^n)$ was first introduced by John and Nirenberg in \cite{john}.
\begin{rem}
It is well known that for any $1\leq p<\infty$, $L^p(\mathbb R^n)\subset WL^{p}(\mathbb R^n)$, and this inclusion is strict. Let $\mathcal{F}(x):=|x|^{-n/p}$ defined on $\mathbb R^n$ with the usual Lebesgue measure. It is obvious that $\mathcal{F}$ is not in $L^p(\mathbb R^n)$ but $\mathcal{F}$ is in $WL^{p}(\mathbb R^n)$ with $\|\mathcal{F}\|_{WL^{p}}=(\nu_n)^{1/p}$, where $\nu_n$ is the Lebesgue measure of the unit ball in $\mathbb R^n$ (see \cite{Grafakos2}).
\end{rem}

\begin{rem}
It is well known that $L^\infty(\mathbb R^n)\subset \mathrm{BMO}(\mathbb R^n)$, and this inclusion is strict. The typical example of a function that is in $\mathrm{BMO}(\mathbb R^n)$ but not in $L^\infty(\mathbb R^n)$ is $\mathcal{F}(x):=\log |x|$ on $\mathbb R^n$. Furthermore, it is not difficult to see that if $\delta\geq0$, then $(\log|x|)^{\delta}\in \mathrm{BMO}(\mathbb R^n)$ if and only if $\delta\leq1$. The space $\mathrm{BMO}(\mathbb R^n)$ arises naturally as a substitute for the space $L^\infty(\mathbb R^n)$ in the study of boundedness of integral operators for certain limiting cases. For example, Calderon--Zygmund operators with standard kernels(such as the Hilbert transform on the real line and the Riesz transforms on $\mathbb R^n$) are known to be bounded on $L^p(\mathbb R^n)$ for all $1<p<\infty$, but not bounded on $L^p(\mathbb R^n)$ for $p=\infty$. Actually, in this case, the standard conditions on their kernels ensure that they are bounded from $L^\infty(\mathbb R^n)$ into $\mathrm{BMO}(\mathbb R^n)$. The Riesz potential operator of order $\alpha$ is known to be bounded from $L^p(\mathbb R^n)$ into $L^q(\mathbb R^n)$ when $1<p<q<\infty$ and $1/q=1/p-\alpha/n$, but not bounded from $L^p(\mathbb R^n)$ into $L^{\infty}(\mathbb R^n)$ when $p=n/{\alpha}$ and $0<\alpha<n$. In this case, it is actually bounded from $L^{n/{\alpha}}(\mathbb R^n)$ into $\mathrm{BMO}(\mathbb R^n)$ (see \cite{duoand}, \cite{Grafakos}, \cite{stein} and \cite{stein2}).
\end{rem}

For a measurable set $E\subseteq\mathbb R^n$ and for a measurable function $f$ on $E$, let us recall the \emph{distribution function} $d_{f;E}(\lambda)$ for $\lambda>0$, and \emph{non-increasing rearrangement} $f^{*}_{E}(t)$ for $t>0$. We define the distribution function of $f$ by
\begin{equation*}
d_{f;E}(\lambda):=m\big(\big\{x\in E:|f(x)|>\lambda\big\}\big),\quad \lambda>0.
\end{equation*}
It is not difficult to see that the distribution function $d_{f;E}(\lambda)$ is non-increasing and right continuous on $(0,\infty)$. Moreover, for $f\in L^p(E)$ with $1\leq p<\infty$, we obtain via Fubini's theorem that
\begin{equation}\label{cake1}
\|f\|_{L^p(E)}=\bigg(\int_0^{\infty}p\lambda^{p-1}d_{f;E}(\lambda)\,d\lambda\bigg)^{1/p}.
\end{equation}
The non-increasing rearrangement of $f$ is defined by
\begin{equation*}
f^{*}_{E}(t):=\inf\big\{\lambda>0:d_{f;E}(\lambda)\leq t\big\},\quad t>0.
\end{equation*}
This definition and the fact that $d_{f;E}$ is non-increasing imply that $f^{*}_{E}$ is non-increasing and right continuous too. Moreover,
\begin{equation*}
f^{*}_{E}(t)>\lambda\Longleftrightarrow d_{f;E}(\lambda)>t.
\end{equation*}
From this, it can be shown that the functions $f^{*}$ and $f$ have the same distribution function(equimeasurable functions), and hence
\begin{equation}\label{equality1}
\|f\|_{L^p(E)}=
\begin{cases}
\displaystyle\bigg(\int_{0}^{\infty}\big[f^{*}_{E}(t)\big]^p\,dt\bigg)^{1/p}, &\mbox{if}~ p\in[1,\infty),\\
\displaystyle\sup_{t>0}f^{*}_{E}(t)=\lim_{t\downarrow 0}f^{*}_{E}(t), &\mbox{if}~ p=\infty.
\end{cases}
\end{equation}
In particular, when $E=\mathbb R^n$, we write $d_{f;E}(\lambda)$ and $f^{*}_{E}(t)$ simply by $d_{f}(\lambda)$ and $f^{*}(t)$, respectively. Accordingly, for $f\in L^p(\mathbb R^n)$ with $1\leq p<\infty$, we have
\begin{equation}\label{cake2}
\|f\|_{L^p}=\bigg(\int_0^{\infty}p\lambda^{p-1}d_{f}(\lambda)\,d\lambda\bigg)^{1/p}.
\end{equation}
Moreover,
\begin{equation}\label{equality2}
\|f\|_{L^p}=
\begin{cases}
\displaystyle\bigg(\int_{0}^{\infty}\big[f^{*}(t)\big]^p\,dt\bigg)^{1/p}, &\mbox{if}~ p\in[1,\infty),\\
\displaystyle\sup_{t>0}f^{*}(t)=\lim_{t\downarrow 0}f^{*}(t), &\mbox{if}~ p=\infty.
\end{cases}
\end{equation}
By definition, we also see that
\begin{equation*}
d_{f;E}(\lambda)=d_{f\cdot\mathbf{1}_E}(\lambda),~\lambda>0, \quad \& \quad f^{*}_{E}(t)=(f\cdot1_{E})^{*}(t),~t>0.
\end{equation*}
Denote by $L^1_{\mathrm{loc}}(\mathbb R^n)$ the set of all locally integrable functions on $\mathbb R^n$. Let $1\leq p<\infty$. We first show that the space $L^p(\mathbb R^n)\cap L^\infty(\mathbb R^n)$ is continuously embedded into $L^q(\mathbb R^n)$ for all $q$ with $p<q<\infty$, and this embedding constant is equal to 1. In fact, if $f\in L^p(\mathbb R^n)\cap L^\infty(\mathbb R^n)$ with $1\leq p<\infty$, then for any $p<q<\infty$, by using \eqref{equality2}, we can deduce that
\begin{equation*}
\begin{split}
\|f\|_{L^q}&=\bigg(\int_0^{\infty}\big[f^{*}(t)\big]^qdt\bigg)^{1/q}\\
&=\bigg(\int_0^{\infty}\big[f^{*}(t)\big]^p\cdot\big[f^{*}(t)\big]^{q-p}dt\bigg)^{1/q}\\
&\leq\bigg(\int_0^{\infty}\big[f^{*}(t)\big]^pdt\bigg)^{1/q}\cdot\big(\|f\|_{L^{\infty}}\big)^{1-p/q}\\
&=\big(\|f\|_{L^p}\big)^{p/q}\cdot\big(\|f\|_{L^\infty}\big)^{1-p/q},
\end{split}
\end{equation*}
as desired. We denote by $f^{**}$ the average function of $f^{*}$(the action of the Hardy operator on $f^{*}$), which is defined by
\begin{equation*}
f^{**}(t):=\frac{\,1\,}{t}\int_0^{t}f^{*}(\tau)\,d\tau,\quad t>0.
\end{equation*}
Since $f^{*}$ is non-increasing, we see that for any $t>0$,
\begin{equation*}
\begin{split}
f^{**}(t)&=\frac{\,1\,}{t}\int_0^{t}f^{*}(\tau)\,d\tau\\
&\geq\frac{\,1\,}{t}\big[f^{*}(t)\cdot t\big]=f^{*}(t),
\end{split}
\end{equation*}
which implies that
\begin{equation}\label{geq}
f^{**}(t)-f^{*}(t)\geq 0,\quad t>0.
\end{equation}
For example: if $f^{*}(t)=\frac{1}{t+1}$, then $f^{**}(t)=\frac{\ln(1+t)}{t}$. It is easy to verify that $f^{**}(t)-f^{*}(t)\geq 0$.
This motivates Bennett, DeVore and Sharpley \cite{be1} to define a new function space $\mathcal{W}$ as follows
(see also Bennett and Sharpley \cite{be2}, Kozono and Wadade \cite{Kozono2}):
\begin{equation*}
\mathcal{W}:=\Big\{f\in L^1_{\mathrm{loc}}(\mathbb R^n):\|f\|_{\mathcal{W}}=\sup_{t>0}\big[f^{**}(t)-f^{*}(t)\big]<+\infty\Big\}.
\end{equation*}
In addition, for any $f\in L^p(\mathbb R^n)$ with $1\leq p<\infty$ and $t>0$, it follows from H\"{o}lder's inequality and \eqref{equality2} that
\begin{equation*}
\begin{split}
\int_0^{t}f^{*}(\tau)\,d\tau&\leq\bigg(\int_0^{t}\big[f^{*}(\tau)\big]^pd\tau\bigg)^{1/p}\cdot t^{1/{p'}}\\
&\leq\bigg(\int_0^{\infty}\big[f^{*}(\tau)\big]^pd\tau\bigg)^{1/p}\cdot t^{1/{p'}}=\|f\|_{L^p}\cdot t^{1/{p'}}.
\end{split}
\end{equation*}
Then we have
\begin{equation}\label{extend}
f^{**}(t)=\frac{\,1\,}{t}\int_0^{t}f^{*}(\tau)\,d\tau\leq\frac{1}{t^{1/p}}\cdot\|f\|_{L^p},\quad t>0.
\end{equation}
Motivated by the results in $L^p(\mathbb R^n)$ mentioned above, we consider the same problem in the setting of Lorentz spaces. We will show that $L^{p,r}(\mathbb R^n)\cap L^\infty(\mathbb R^n)$ is continuously embedded into $L^q(\mathbb R^n)$ for all $q$ with $p<q<\infty$, where $1\leq p<\infty$ and $p<r\leq\infty$, and this embedding constant is equal to 2 (see Theorem \ref{thmwang1} below). Meanwhile, if $f\in L^{p,r}(\mathbb R^n)$ with $1<p<\infty$ and $p\leq r\leq\infty$, then (see \eqref{key1} below)
\begin{equation*}
f^{**}(t)\leq\Big(\frac{p'}{r'}\Big)^{1/{r'}}
\frac{1}{t^{1/p}}\cdot\|f\|_{L^{p,r}},\quad t>0,
\end{equation*}
where $r'=r/{(r-1)}$ denotes the H\"{o}lder conjugate exponent of $r\in(1,\infty)$, and $r'=1$ when $r=\infty$. This is just \eqref{extend} with $1<p<\infty$ by putting $r=p$. Unfortunately, the above key estimate does not hold if $p=1$. By using the above estimate for $f^{**}$, we are able to show that the space $L^{p,r}(\mathbb R^n)\cap \mathcal{W}$ is continuously embedded into $L^q(\mathbb R^n)$ for all $q$ with $p<q<\infty$, where $1<p<\infty$ and $p\leq r\leq\infty$. As a consequence, we have $L^{p,r}(\mathbb R^n)\cap \mathrm{BMO}(\mathbb R^n)\hookrightarrow L^q(\mathbb R^n)$ since $\mathrm{BMO}(\mathbb R^n)$ is a subspace of $\mathcal{W}$ (see Theorem \ref{strong} and Corollary \ref{cor33} below).

On the other hand, let us now recall the definitions of the classical Morrey space and weak Morrey space. Let $\Omega$ be a connected open set of $\mathbb R^n$(bounded or unbounded). We denote by $\overline{\Omega}$ the closure of $\Omega$, and by $\mathrm{diam}\,\Omega$ the diameter of $\Omega$. For any $x_0\in\mathbb R^n$ and $r\in(0,\infty)$, set
\begin{equation*}
\Omega(x_0,r)=B(x_0,r)\cap\Omega,
\end{equation*}
where $B(x_0,r)$ denotes the open ball centered at $x_0$ with radius $r$, that is,
\begin{equation*}
B(x_0,r):=\big\{x\in\mathbb R^n:|x-x_0|<r\big\}.
\end{equation*}
For $1\leq p<\infty$ and $0\leq\kappa\leq1$, define
\begin{equation*}
\big\|f\big\|_{M^{p;\kappa}(\Omega)}:=\sup_{x_0\in\overline{\Omega},r\in(0,\mathrm{diam}\,\Omega)}
\bigg(\frac{1}{m(\Omega(x_0,r))^{\kappa}}\int_{\Omega(x_0,r)}|f(y)|^p\,dy\bigg)^{1/p}.
\end{equation*}
The Morrey space $M^{p;\kappa}(\Omega)$ is defined to be the set of all locally integrable functions $f$ on $\Omega$ for which $\|f\|_{M^{p;\kappa}(\Omega)}$ is finite. The above space was originally introduced by Morrey to investigate the local behaviour (regularity) of solutions to second order elliptic partial differential equations (see \cite{morrey}). In the case of $\Omega=\mathbb R^n$, $M^{p;\kappa}(\mathbb R^n)$ is the classical Morrey space, which is defined as the set of all locally integrable functions $f$ on $\mathbb R^n$ which satisfy
\begin{equation*}
\begin{split}
\|f\|_{M^{p;\kappa}}&:=\sup_{B\subset\mathbb R^n}\bigg(\frac{1}{m(B)^{\kappa}}\int_{B}|f(y)|^p\,dy\bigg)^{1/p}\\
&=\sup_{B\subset\mathbb R^n}\frac{1}{m(B)^{\kappa/p}}\|f\|_{L^{p}({B})}<+\infty,
\end{split}
\end{equation*}
where the supremum is taken over all balls $B$ in $\mathbb R^n$. Recall that the weak Morrey space $WM^{p;\kappa}(\mathbb R^n)$ is defined as the set of all measurable functions $f$ on $\mathbb R^n$ which satisfy
\begin{equation*}
\begin{split}
\|f\|_{WM^{p;\kappa}}&:=\sup_{B\subset\mathbb R^n}\sup_{\lambda>0}
\frac{1}{m(B)^{\kappa/p}}\lambda\cdot m\big(\big\{y\in B:|f(y)|>\lambda\big\}\big)^{1/p}\\
&=\sup_{B\subset\mathbb R^n}\frac{1}{m(B)^{\kappa/p}}\|f\|_{WL^{p}({B})}<+\infty,
\end{split}
\end{equation*}
where the supremum is taken over all balls $B\subset\mathbb R^n$ and all $\lambda>0$.
\begin{rem}
The space $M^{p;\kappa}(\mathbb R^n)$ becomes a Banach space with the norm $\|\cdot\|_{M^{p;\kappa}}$. Moreover, for $\kappa=0$ and $\kappa=1$, the Morrey spaces $M^{p;0}(\mathbb R^n)$ and $M^{p;1}(\mathbb R^n)$ coincide (with equality of norms) with the spaces $L^p(\mathbb R^n)$ and $L^{\infty}(\mathbb R^n)$, respectively. If $\kappa<0$ or $\kappa>1$, then the space $M^{p;\kappa}(\mathbb R^n)$ contains only the zero function.
It is clear that for $1\leq p<\infty$ and $0<\kappa<1$,
\begin{equation}\label{inlusion}
M^{p;\kappa}(\mathbb R^n)\subset WM^{p;\kappa}(\mathbb R^n),
\end{equation}
and we have that for any $f\in M^{p;\kappa}(\mathbb R^n)$,
\begin{equation*}
\|f\|_{WM^{p;\kappa}}\leq\|f\|_{M^{p;\kappa}},
\end{equation*}
since for any ball $B$ in $\mathbb R^n$, $\|f\|_{WL^p(B)}\leq \|f\|_{L^p(B)}$ as a direct consequence of Chebyshev's inequality. Moreover, it was proved in \cite{gun} that the weak Morrey space $WM^{p;\kappa_*}(\mathbb R^n)$ is continuously embedded into the Morrey space $M^{q;\kappa}(\mathbb R^n)$ when $1\leq q<p<\infty$, $0<\kappa_*<1$ and $(1-\kappa)p=(1-\kappa_*)q$, and the size of the embedding constant from weak Morrey spaces into Morrey spaces is specified, see \cite[Theorem 1.1]{gun}.
\end{rem}

\begin{rem}
Let $0\leq\kappa<1$. It is well known that the boundedness of the Hardy--Littlewood maximal operator $M$ on Banach function spaces plays a key role in harmonic analysis. Within the framework of Morrey spaces, we know that the Hardy--Littlewood maximal operator $M$ is bounded from $M^{1;\kappa}(\mathbb R^n)$ into $WM^{1;\kappa}(\mathbb R^n)$ by \cite{chia} (see also \cite[Theorem 3.2]{komori} and \cite[Theorem 1]{nakai2}), but it is not bounded on $M^{1;\kappa}(\mathbb R^n)$ by \cite{nakai} (see also \cite[Proposition 6.1]{lida} and \cite[Lemma 3.5]{sawano}). This suggests that there exists a function $\mathcal{F}\in WM^{1;\kappa}(\mathbb R^n)$ such that $\mathcal{F}\notin M^{1;\kappa}(\mathbb R^n)$. A constructive proof of this fact can be found in \cite{gun}. Moreover, it can be shown that there exists a function $\mathcal{F}\in WM^{p;\kappa}(\mathbb R^n)$ such that $\mathcal{F}\notin M^{p;\kappa}(\mathbb R^n)$ for $1<p<\infty$ and $0<\kappa<1$ (see, for instance, \cite[Theorem 1.2]{gun}). That is to say, the inclusion \eqref{inlusion} is strict.
\end{rem}
For more information on the theory of Morrey spaces, see \cite{adams}, \cite{adams1}, \cite{adamsxiao} and \cite{peetre}.

Let $1\leq p<\infty$ and $0<\kappa<1$. Let us now prove that the space $M^{p;\kappa}(\mathbb R^n)\cap L^\infty(\mathbb R^n)$ is continuously embedded into $M^{q;\kappa}(\mathbb R^n)$ for all $q$ with $p<q<\infty$, and this embedding constant is equal to 1. In fact, if $f\in M^{p;\kappa}(\mathbb R^n)\cap L^\infty(\mathbb R^n)$, then for any $p<q<\infty$ and for any ball $\mathcal{B}$ in $\mathbb R^n$, by using \eqref{equality1}, we have
\begin{equation*}
\begin{split}
\frac{1}{m(\mathcal{B})^{\kappa/q}}\|f\|_{L^{q}(\mathcal{B})}
&=\frac{1}{m(\mathcal{B})^{\kappa/q}}\bigg(\int_0^{\infty}\big[f^{*}_{\mathcal{B}}(t)\big]^qdt\bigg)^{1/q}\\
&=\frac{1}{m(\mathcal{B})^{\kappa/q}}
\bigg(\int_0^{\infty}\big[f^{*}_{\mathcal{B}}(t)\big]^p\cdot\big[f^{*}_{\mathcal{B}}(t)\big]^{q-p}dt\bigg)^{1/q}.\\
\end{split}
\end{equation*}
It is clear that for any fixed ball $\mathcal{B}\subset\mathbb R^n$ and $\lambda>0$,
\begin{equation*}
d_{f;\mathcal{B}}(\lambda)\leq d_{f}(\lambda),
\end{equation*}
which in turn implies that for all $t>0$,
\begin{equation*}
f^{*}_{\mathcal{B}}(t)\leq f^{*}(t)\leq \|f\|_{L^{\infty}}=\sup_{t>0}f^{*}(t).
\end{equation*}
Therefore, by using \eqref{equality1} again, we further obtain
\begin{equation*}
\begin{split}
\frac{1}{m(\mathcal{B})^{\kappa/q}}\|f\|_{L^{q}(\mathcal{B})}
&\leq\bigg(\frac{1}{m(\mathcal{B})^{\kappa}}\int_0^{\infty}\big[f^{*}_{\mathcal{B}}(t)\big]^pdt\bigg)^{1/q}
\cdot\big(\|f\|_{L^{\infty}}\big)^{1-p/q}\\
&\leq\big(\|f\|_{M^{p;\kappa}}\big)^{p/q}\cdot\big(\|f\|_{L^\infty}\big)^{1-p/q}.
\end{split}
\end{equation*}
Taking the supremum over all balls $\mathcal{B}$ in $\mathbb R^n$, we conclude that
\begin{equation*}
\|f\|_{M^{q;\kappa}}\leq\big(\|f\|_{M^{p;\kappa}}\big)^{p/q}\cdot\big(\|f\|_{L^\infty}\big)^{1-p/q}.
\end{equation*}
This is our desired estimate.

The purpose of this paper is to define a class of new Lorentz--Morrey spaces $LM^{p,r;\kappa}(\mathbb R^n)$. Motivated by the above result in $M^{p;\kappa}(\mathbb R^n)$,
we consider the same problem in these Lorentz--Morrey spaces, and prove that $LM^{p,r;\kappa}(\mathbb R^n)\cap L^\infty(\mathbb R^n)$ is continuously embedded into $M^{q;\kappa}(\mathbb R^n)$ for all $q$ with $p<q<\infty$, where $1\leq p<\infty$, $p<r\leq\infty$ and $0<\kappa<1$. Moreover, this embedding constant is equal to 2 (see Theorem \ref{thmwang2} below). Then we are going to prove that
\begin{equation*}
LM^{p,r;\kappa}(\mathbb R^n)\cap \mathrm{BMO}(\mathbb R^n)\hookrightarrow M^{q;\kappa}(\mathbb R^n)
\end{equation*}
for all $q$ with $p<q<\infty$, where $1<p<\infty$, $p\leq r\leq\infty$ and $0<\kappa<1$ (see Theorem \ref{weak} below).

\section{Notations and definitions}

Let us recall the definition of Lorentz spaces and present their basic properties. See \cite[Chapter 1]{Grafakos2} and \cite[Chapter 5]{sw} for more details.
\begin{defn}[\cite{Grafakos2,sw}]
Let $1\leq p<\infty$ and $1\leq r\leq\infty$. Then the Lorentz space $L^{p,r}(\mathbb R^n)$ with indices $p$ and $r$ is defined to be the set of all measurable functions $f$ on $\mathbb R^n$ such that $\|f\|_{L^{p,r}}<+\infty$, where the functional $\|\cdot\|_{L^{p,r}}$ is defined by
\begin{equation*}
\|f\|_{L^{p,r}}:=
\begin{cases}
\displaystyle\bigg(\int_0^{\infty}\Big[s^{1/p}f^{*}(s)\Big]^r\frac{ds}{s}\bigg)^{1/r}, &\mbox{if}~ 1\leq r<\infty,\\
\displaystyle\sup_{s>0}\Big[s^{1/p}f^{*}(s)\Big], &\mbox{if}~ r=\infty.
\end{cases}
\end{equation*}
\end{defn}
The index $p$ in $L^{p,r}(\mathbb R^n)$ is called the \emph{principal index}, and $q$ is called the \emph{secondary index}. For $1\leq p<\infty$ and $r=p$, by using \eqref{equality2}, we have
\begin{equation}\label{lorentz1}
\|f\|_{L^{p,p}}=\bigg(\int_0^{\infty}\Big[s^{1/p}f^{*}(s)\Big]^p\frac{ds}{s}\bigg)^{1/p}
=\bigg(\int_{0}^{\infty}\big[f^{*}(s)\big]^pds\bigg)^{1/p}=\|f\|_{L^p},
\end{equation}
and hence $L^{p,p}(\mathbb R^n)=L^{p}(\mathbb R^n)$. Moreover, for $1\leq p<\infty$ and $r=\infty$, we can show that $\|f\|_{L^{p,\infty}}=\|f\|_{WL^{p}}$, which is based on the following result. Therefore, $L^{p,\infty}(\mathbb R^n)=WL^{p}(\mathbb R^n)$.

\begin{prop}[\cite{Grafakos2,sw}]\label{prop21}
Let $f$ be a measurable function on the set $E\subseteq\mathbb R^n$ and $1\leq p<\infty$. Then we have the identity
\begin{equation*}
\sup_{s>0}s^{1/p}f^{*}_{E}(s)=\sup_{\lambda>0}\lambda d_{f;E}(\lambda)^{1/p}.
\end{equation*}
In particular, when $E=\mathbb R^n$, we have the identity
\begin{equation*}
\sup_{s>0}s^{1/p}f^{*}(s)=\sup_{\lambda>0}\lambda d_{f}(\lambda)^{1/p}.
\end{equation*}
\end{prop}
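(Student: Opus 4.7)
The plan is to establish both inequalities in the desired identity, using the key equivalence
\[
f^{*}_{E}(t)>\lambda\Longleftrightarrow d_{f;E}(\lambda)>t,
\]
which was already recorded in the preliminaries as a direct consequence of the definition of $f^{*}_E$ as the distributional inverse of $d_{f;E}$. Set $A:=\sup_{s>0}s^{1/p}f^{*}_{E}(s)$ and $B:=\sup_{\lambda>0}\lambda\,d_{f;E}(\lambda)^{1/p}$; the goal is to show $A=B$ by proving $A\le B$ and $B\le A$ separately, with both arguments being symmetric in $(s,\lambda)$.

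For $A\le B$, I would fix $s>0$. The case $f^{*}_{E}(s)=0$ is trivial, so assume $f^{*}_{E}(s)>0$. For any $\lambda$ with $0<\lambda<f^{*}_{E}(s)$ the equivalence gives $d_{f;E}(\lambda)>s$, hence $\lambda\,s^{1/p}\le \lambda\,d_{f;E}(\lambda)^{1/p}\le B$. Letting $\lambda\uparrow f^{*}_{E}(s)$ yields $s^{1/p}f^{*}_{E}(s)\le B$, and taking the supremum over $s>0$ delivers $A\le B$.

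The reverse direction is entirely analogous: fix $\lambda>0$ with $d_{f;E}(\lambda)>0$, and for any $t$ with $0<t<d_{f;E}(\lambda)$ the equivalence gives $f^{*}_{E}(t)>\lambda$, so $\lambda\,t^{1/p}\le t^{1/p}f^{*}_{E}(t)\le A$. Letting $t\uparrow d_{f;E}(\lambda)$ produces $\lambda\,d_{f;E}(\lambda)^{1/p}\le A$, and taking the supremum over $\lambda>0$ gives $B\le A$. The special case $E=\mathbb{R}^n$ then follows immediately.

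The only delicate point is the passage to the limit at the boundary, where the strict inequality $\lambda<f^{*}_{E}(s)$ cannot be replaced by $\le$ directly from the equivalence; however this is harmless because both $s\mapsto s^{1/p}$ and $\lambda\mapsto\lambda$ are continuous, so taking a one-sided limit preserves the upper bound. Since both $f^{*}_{E}$ and $d_{f;E}$ are right-continuous (noted in the preliminaries), no pathology arises at any particular point. Thus the proof is essentially a clean application of the duality between $f^{*}_E$ and $d_{f;E}$, and I expect no substantive obstacle.
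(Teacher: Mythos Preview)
Your argument is correct and is precisely the standard proof found in the references the paper cites. Note that the paper does not supply its own proof of this proposition; it is stated with attribution to \cite{Grafakos2,sw} and used as a known fact, so there is nothing further to compare.
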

Note that $L^{p}(\mathbb R^n)=L^{p,p}(\mathbb R^n)\subset L^{p,\infty}(\mathbb R^n)$ with $1\leq p<\infty$. The next result is well known in harmonic analysis, which shows that for any fixed $p$, the Lorentz spaces $L^{p,r}(\mathbb R^n)$ are increasing with respect to $r$.
\begin{lem}[\cite{Grafakos2,sw}]\label{prop22}
Let $1\leq p<\infty$ and $1\leq q<r\leq\infty$. Then $L^{p,q}(\mathbb R^n)\subset L^{p,r}(\mathbb R^n)$, and
\begin{equation*}
\|f\|_{L^{p,r}}\lesssim\|f\|_{L^{p,q}}
\end{equation*}
with the implicit positive constant independent of $f$. In particular, we have
\begin{equation*}
L^{p}(\mathbb R^n)=L^{p,p}(\mathbb R^n)\subset L^{p,q}(\mathbb R^n)\subset L^{p,\infty}(\mathbb R^n)
\end{equation*}
for any $p<q<\infty$.
\end{lem}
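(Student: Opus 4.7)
The plan is to establish the embedding $L^{p,q}(\mathbb R^n)\subset L^{p,r}(\mathbb R^n)$ in two steps: first pin down the endpoint case $r=\infty$ using only the monotonicity of $f^{*}$, and then bootstrap from the endpoint to the intermediate range $q<r<\infty$ by a simple interpolation-type factorisation of the integrand.

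For the endpoint $r=\infty$, I would fix $s>0$ and use that $f^{*}$ is non-increasing, so $f^{*}(\tau)\geq f^{*}(s)$ for every $\tau\in(0,s]$. This allows me to bound
\begin{equation*}
\|f\|_{L^{p,q}}^{q}\geq\int_0^{s}\big[\tau^{1/p}f^{*}(\tau)\big]^{q}\frac{d\tau}{\tau}\geq\big[f^{*}(s)\big]^{q}\int_0^{s}\tau^{q/p-1}\,d\tau=\frac{p}{q}\big[s^{1/p}f^{*}(s)\big]^{q}.
\end{equation*}
Rearranging and taking the supremum in $s$ gives the clean endpoint inequality
\begin{equation*}
\|f\|_{L^{p,\infty}}\leq\Big(\frac{q}{p}\Big)^{1/q}\|f\|_{L^{p,q}}.
\end{equation*}

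For the intermediate range $q<r<\infty$, I would split the integrand by writing $[s^{1/p}f^{*}(s)]^{r}=[s^{1/p}f^{*}(s)]^{r-q}\cdot[s^{1/p}f^{*}(s)]^{q}$, pull the first factor out uniformly as $\|f\|_{L^{p,\infty}}^{r-q}$, and recognise the remaining integral as $\|f\|_{L^{p,q}}^{q}$. Substituting the endpoint estimate produces
\begin{equation*}
\|f\|_{L^{p,r}}\leq\Big(\frac{q}{p}\Big)^{(r-q)/(qr)}\|f\|_{L^{p,q}},
\end{equation*}
which is the desired continuous embedding with an explicit constant depending only on $p,q,r$. The particular chain $L^{p}=L^{p,p}\subset L^{p,q}\subset L^{p,\infty}$ is then immediate by applying the general statement with principal index $p$ and varying the secondary index.

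There is no real obstacle here: the argument is entirely classical and rests on the monotonicity of the non-increasing rearrangement. The only point requiring a little care is treating $r=\infty$ separately (since the definition of $\|\cdot\|_{L^{p,\infty}}$ is a supremum rather than an integral), but once the endpoint bound is in hand the intermediate case is purely algebraic.
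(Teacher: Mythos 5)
Your proof is correct and is precisely the classical argument from the cited references (it is essentially Proposition 1.4.10 in Grafakos's \emph{Classical Fourier Analysis}): the endpoint bound $\|f\|_{L^{p,\infty}}\leq(q/p)^{1/q}\|f\|_{L^{p,q}}$ via monotonicity of $f^{*}$, followed by the factorisation $[s^{1/p}f^{*}(s)]^{r}=[s^{1/p}f^{*}(s)]^{r-q}[s^{1/p}f^{*}(s)]^{q}$ for intermediate $r$, with the resulting constant $(q/p)^{1/q-1/r}$. The paper itself gives no proof of this lemma, deferring to the literature, so there is nothing to compare beyond noting that your argument is the standard one.
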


The following important formula tells us that we can compute $\|\cdot\|_{L^{p,r}}$ in terms of the distribution function, which may be regarded as a generalization of \eqref{cake1} and \eqref{cake2}.
\begin{prop}[\cite{Grafakos2,sw}]\label{prop23}
Let $1\leq p<\infty$ and $1\leq r<\infty$. For given $f\in L^{p,r}(\mathbb R^n)$, we have the identity
\begin{equation*}
\|f\|_{L^{p,r}}=\bigg(p\int_0^{\infty}\Big[\lambda d_f(\lambda)^{1/p}\Big]^r\frac{d\lambda}{\lambda}\bigg)^{1/r}<+\infty.
\end{equation*}
\end{prop}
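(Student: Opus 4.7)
The plan is to compute the right-hand side by unwinding the power $[f^*(s)]^r$ through the layer cake representation, applying Fubini's theorem, and then exploiting the fundamental equivalence $f^*(s)>\lambda \iff d_f(\lambda)>s$ (which the paper has already noted as a consequence of the definition of the rearrangement). This will convert the integral over $s$ involving $f^*$ into an integral over $\lambda$ involving $d_f$.

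More precisely, I would start from the defining expression
\begin{equation*}
\|f\|_{L^{p,r}}^{r}=\int_{0}^{\infty}\bigl[s^{1/p}f^{*}(s)\bigr]^{r}\frac{ds}{s}=\int_{0}^{\infty}s^{r/p-1}\bigl[f^{*}(s)\bigr]^{r}\,ds,
\end{equation*}
and then write $[f^{*}(s)]^{r}=r\int_{0}^{f^{*}(s)}\lambda^{r-1}\,d\lambda$. Substituting this and interchanging the order of integration by Fubini (justified since the integrand is non-negative and measurable), the double integral reduces to
\begin{equation*}
r\int_{0}^{\infty}\lambda^{r-1}\!\!\int_{\{s>0:\,f^{*}(s)>\lambda\}} s^{r/p-1}\,ds\,d\lambda.
\end{equation*}

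The key step is to identify the inner set. Since $f^{*}(s)>\lambda$ if and only if $d_{f}(\lambda)>s$, the inner integral is exactly $\int_{0}^{d_{f}(\lambda)} s^{r/p-1}\,ds=\tfrac{p}{r}\,d_{f}(\lambda)^{r/p}$. Substituting and simplifying yields
\begin{equation*}
\|f\|_{L^{p,r}}^{r}=p\int_{0}^{\infty}\lambda^{r-1}\,d_{f}(\lambda)^{r/p}\,d\lambda=p\int_{0}^{\infty}\bigl[\lambda\,d_{f}(\lambda)^{1/p}\bigr]^{r}\,\frac{d\lambda}{\lambda},
\end{equation*}
which is the claimed identity. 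Finiteness of the right-hand side then follows automatically from the assumption $f\in L^{p,r}(\mathbb R^{n})$, i.e., finiteness of the left-hand side.

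I do not expect any real obstacle here; the one place one must be careful is the equivalence $f^{*}(s)>\lambda \iff d_{f}(\lambda)>s$, which requires that both $f^{*}$ and $d_{f}$ are non-increasing and right continuous so that the level sets are honest intervals of the form $(0,d_{f}(\lambda))$. These properties are already recorded in the introduction. The rest is a routine Fubini calculation, and the special cases $r=p$ and $r=\infty$ recover the well-known identities \eqref{cake2} and Proposition \ref{prop21}, respectively, which serves as a sanity check.
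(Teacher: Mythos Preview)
Your proof is correct and is precisely the standard argument (layer cake for $[f^*(s)]^r$, Fubini, and the equivalence $f^*(s)>\lambda\iff d_f(\lambda)>s$) that one finds in the cited references. Note that the paper itself does not supply a proof of this proposition; it is quoted from \cite{Grafakos2,sw}, so there is nothing to compare against beyond observing that your argument is the textbook one. One minor quibble: your closing sanity check mentions the case $r=\infty$, but the proposition is stated only for $1\le r<\infty$, so that case is handled separately by Proposition~\ref{prop21} rather than recovered as a special case here.
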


We are now in a position to state the definition of our new spaces.
\begin{defn}
Let $1\leq p<\infty$, $1\leq r\leq\infty$ and $0<\kappa<1$. Then the Lorentz--Morrey space $LM^{p,r;\kappa}(\mathbb R^n)$ with indices $p,r$ and $\kappa$ is defined as the set of all measurable functions $f$ on $\mathbb R^n$ which satisfy
\begin{equation*}
\|f\|_{LM^{p,r;\kappa}}:=\sup_{B\subset\mathbb R^n}\frac{1}{m(B)^{\kappa/p}}\|f\|_{L^{p,r}(B)}<+\infty,
\end{equation*}
where the supremum is taken over all balls $B$ in $\mathbb R^n$ and we define
\begin{equation*}
\|f\|_{L^{p,r}(B)}=\|f\cdot \mathbf{1}_{B}\|_{L^{p,r}}:=
\begin{cases}
\displaystyle\bigg(\int_0^{\infty}\Big[s^{1/p}f^{*}_{B}(s)\Big]^r\frac{ds}{s}\bigg)^{1/r}, &\mbox{if}~ 1\leq r<\infty,\\
\displaystyle\sup_{s>0}\Big[s^{1/p}f^{*}_{B}(s)\Big], &\mbox{if}~ r=\infty.
\end{cases}
\end{equation*}
\end{defn}
We state and prove some basic properties of such spaces. First of all, for $1\leq p<\infty$, $0<\kappa<1$ and $r=p$, it follows directly from \eqref{lorentz1} that
\begin{equation*}
\begin{split}
\|f\|_{LM^{p,p;\kappa}}&=\sup_{B\subset\mathbb R^n}\frac{1}{m(B)^{\kappa/p}}\|f\cdot \mathbf{1}_{B}\|_{L^{p}}\\
&=\sup_{B\subset\mathbb R^n}\frac{1}{m(B)^{\kappa/p}}\|f\|_{L^{p}(B)}=\|f\|_{M^{p;\kappa}},
\end{split}
\end{equation*}
and hence $LM^{p,p;\kappa}(\mathbb R^n)=M^{p;\kappa}(\mathbb R^n)$. Moreover, for $1\leq p<\infty$, $0<\kappa<1$ and $r=\infty$, it follows from Proposition \ref{prop21} that
\begin{equation*}
\begin{split}
\|f\|_{LM^{p,\infty;\kappa}}&=\sup_{B\subset\mathbb R^n}\frac{1}{m(B)^{\kappa/p}}\sup_{\lambda>0}\lambda d_{f;B}(\lambda)^{1/p}\\
&=\sup_{B\subset\mathbb R^n}\frac{1}{m(B)^{\kappa/p}}\|f\|_{WL^{p}(B)}=\|f\|_{WM^{p;\kappa}}.
\end{split}
\end{equation*}
Therefore, we have $LM^{p,\infty;\kappa}(\mathbb R^n)=WM^{p;\kappa}(\mathbb R^n)$. Since for each fixed ball $B$ in $\mathbb R^n$ and for $1\leq p<\infty$, in view of Lemma \ref{prop22},
\begin{equation*}
\|f\cdot \mathbf{1}_{B}\|_{L^{p,r}}\lesssim\|f\cdot \mathbf{1}_{B}\|_{L^{p,q}}
\end{equation*}
holds whenever $1\leq q<r\leq\infty$. As a consequence, we have the following result, which tells us that for some fixed $p$ and $\kappa$, the Lorentz--Morrey spaces $LM^{p,r;\kappa}(\mathbb R^n)$ are increasing with respect to the second index $r$.
\begin{lem}
Let $1\leq p<\infty$, $1\leq q<r\leq\infty$ and $0<\kappa<1$. Then $LM^{p,q;\kappa}(\mathbb R^n)\subset LM^{p,r;\kappa}(\mathbb R^n)$, and
\begin{equation*}
\|f\|_{LM^{p,r;\kappa}}\lesssim\|f\|_{LM^{p,q;\kappa}}
\end{equation*}
with the implicit positive constant independent of $f$. In particular, we have
\begin{equation*}
M^{p;\kappa}(\mathbb R^n)=LM^{p,p;\kappa}(\mathbb R^n)\subset LM^{p,q;\kappa}(\mathbb R^n)\subset LM^{p,\infty;\kappa}(\mathbb R^n)
\end{equation*}
for any $p<q<\infty$.
\end{lem}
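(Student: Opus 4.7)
The plan is to reduce the lemma to the analogous monotonicity statement for ordinary Lorentz spaces (Lemma \ref{prop22}), applied locally to the truncation $f\cdot\mathbf{1}_B$ for each ball $B$. This reduction is essentially indicated in the paragraph preceding the lemma; what remains is to pass the pointwise-in-$B$ inequality through the supremum that defines the Lorentz--Morrey norm.

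First, I would fix any ball $B\subset\mathbb R^n$ and apply Lemma \ref{prop22} to the measurable function $f\cdot\mathbf{1}_B\in L^{p,q}(\mathbb R^n)$, obtaining
\begin{equation*}
\|f\cdot\mathbf{1}_B\|_{L^{p,r}}\lesssim\|f\cdot\mathbf{1}_B\|_{L^{p,q}},
\end{equation*}
where the implicit constant depends only on $p,q,r$ and in particular is independent of $B$ and $f$. By the definition $\|f\|_{L^{p,r}(B)}=\|f\cdot\mathbf{1}_B\|_{L^{p,r}}$, this rewrites as $\|f\|_{L^{p,r}(B)}\lesssim\|f\|_{L^{p,q}(B)}$.

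Next, I would divide both sides by the normalizing factor $m(B)^{\kappa/p}$, which is independent of the Lorentz indices, to obtain
\begin{equation*}
\frac{1}{m(B)^{\kappa/p}}\|f\|_{L^{p,r}(B)}\lesssim\frac{1}{m(B)^{\kappa/p}}\|f\|_{L^{p,q}(B)}\leq\|f\|_{LM^{p,q;\kappa}},
\end{equation*}
where the last step uses the definition of the Lorentz--Morrey norm. Since the right-hand side is now independent of $B$, taking the supremum over all balls $B\subset\mathbb R^n$ on the left yields $\|f\|_{LM^{p,r;\kappa}}\lesssim\|f\|_{LM^{p,q;\kappa}}$, which gives both the inclusion $LM^{p,q;\kappa}(\mathbb R^n)\subset LM^{p,r;\kappa}(\mathbb R^n)$ and the continuity of the embedding.

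The displayed chain in the ``in particular'' clause follows at once from the first assertion together with the identifications $LM^{p,p;\kappa}(\mathbb R^n)=M^{p;\kappa}(\mathbb R^n)$ and $LM^{p,\infty;\kappa}(\mathbb R^n)=WM^{p;\kappa}(\mathbb R^n)$ established just above in the excerpt. I do not anticipate any real obstacle: the only point that requires a moment's care is verifying that the implicit constant in Lemma \ref{prop22} genuinely does not depend on the ball $B$ (only on $p,q,r$), so that it survives the supremum; this is clear because the constant comes from purely one-dimensional computations with the rearrangement $f^{*}_B$ that do not see the set $B$ beyond its role in defining the rearrangement.
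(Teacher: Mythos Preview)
Your proposal is correct and follows exactly the approach the paper takes: the paper derives the lemma as an immediate consequence of applying Lemma~\ref{prop22} to $f\cdot\mathbf{1}_B$ for each fixed ball $B$, noting that the implicit constant is independent of $B$, and then passing to the supremum defining $\|\cdot\|_{LM^{p,r;\kappa}}$. Your added remark about why the constant in Lemma~\ref{prop22} is uniform in $B$ is a harmless elaboration of what the paper leaves implicit.
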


Next, based on Proposition \ref{prop23} and the fact that $f^{*}_{B}(s)=(f\cdot1_{B})^{*}(s)$, we can also compute $\|\cdot\|_{LM^{p,r;\kappa}}$ in terms of the distribution function.
\begin{prop}\label{prop24}
Let $1\leq p<\infty$, $1\leq r<\infty$ and $0<\kappa<1$. For given $f\in LM^{p,r;\kappa}(\mathbb R^n)$, we have the identity
\begin{equation*}
\|f\|_{LM^{p,r;\kappa}}=\sup_{B\subset\mathbb R^n}\frac{1}{m(B)^{\kappa/p}}
\bigg(p\int_0^{\infty}\Big[\lambda d_{f;B}(\lambda)^{1/p}\Big]^r\frac{d\lambda}{\lambda}\bigg)^{1/r}<+\infty.
\end{equation*}
\end{prop}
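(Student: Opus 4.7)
The plan is to reduce the claim to Proposition \ref{prop23} by a ball-by-ball truncation argument. I would fix an arbitrary ball $B \subset \mathbb R^n$, apply Proposition \ref{prop23} to the truncated function $f \cdot \mathbf{1}_B$, then divide by $m(B)^{\kappa/p}$ and take the supremum over all balls $B$.

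First I would invoke two identities already recorded in the preliminaries: the definitional equality $\|f\|_{L^{p,r}(B)} = \|f \cdot \mathbf{1}_B\|_{L^{p,r}}$ and the elementary relation $d_{f;B}(\lambda) = d_{f \cdot \mathbf{1}_B}(\lambda)$ for every $\lambda > 0$. Since $f \in LM^{p,r;\kappa}(\mathbb R^n)$, we have the bound $\|f \cdot \mathbf{1}_B\|_{L^{p,r}} \leq m(B)^{\kappa/p}\,\|f\|_{LM^{p,r;\kappa}} < +\infty$, so $f \cdot \mathbf{1}_B$ belongs to $L^{p,r}(\mathbb R^n)$ and Proposition \ref{prop23} is directly applicable. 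It yields
\begin{equation*}
\|f\|_{L^{p,r}(B)} \;=\; \|f \cdot \mathbf{1}_B\|_{L^{p,r}} \;=\; \left(p \int_0^{\infty} \bigl[\lambda\, d_{f;B}(\lambda)^{1/p}\bigr]^r \frac{d\lambda}{\lambda}\right)^{1/r}.
\end{equation*}

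Dividing both sides by $m(B)^{\kappa/p}$ and taking the supremum over all balls $B \subset \mathbb R^n$ gives exactly the asserted identity, and the finiteness claim $\|f\|_{LM^{p,r;\kappa}} < +\infty$ is built into the hypothesis $f \in LM^{p,r;\kappa}(\mathbb R^n)$. There is essentially no technical obstacle: once the two truncation identities are recognized, Proposition \ref{prop24} is a direct transcription of Proposition \ref{prop23} to the local framework, since the weight $m(B)^{-\kappa/p}$ is a constant with respect to the inner integration over $\lambda$ and commutes with the outer supremum.
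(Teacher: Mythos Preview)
Your proposal is correct and follows essentially the same approach as the paper: the paper states Proposition~\ref{prop24} immediately after noting that it follows from Proposition~\ref{prop23} together with the identity $f^{*}_{B}(s)=(f\cdot\mathbf{1}_{B})^{*}(s)$, which is precisely the ball-by-ball truncation argument you carry out. Your write-up is in fact slightly more careful, since you explicitly verify that $f\cdot\mathbf{1}_B\in L^{p,r}(\mathbb R^n)$ before invoking Proposition~\ref{prop23}.
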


This identity, together with estimates of the distribution function, leads to the following inclusion relation, which is a substantial improvement and extension of known results in \cite[Theorem 1.1]{gun}.
\begin{thm}\label{appendix}
Let $1\leq q<p<\infty$ and $0<\kappa_{*}<1$. Then for $0<\kappa<1$ and $q\leq r<\infty$, we have
\begin{equation*}
LM^{p,\infty;\kappa_{*}}(\mathbb R^n)\subset LM^{q,r;\kappa}(\mathbb R^n)
\end{equation*}
whenever $(1-\kappa)p=(1-\kappa_*)q$, and
\begin{equation*}
\|f\|_{LM^{q,r;\kappa}}\leq \Big[\frac{pq}{r(p-q)}\Big]^{\frac{1}{r}}\|f\|_{LM^{p,\infty;\kappa_{*}}}.
\end{equation*}
In particular, when $r=q$,
\begin{equation*}
WM^{p;\kappa_{*}}(\mathbb R^n)\subset M^{q;\kappa}(\mathbb R^n)
\end{equation*}
whenever $(1-\kappa)p=(1-\kappa_{*})q$, and
\begin{equation*}
\|f\|_{M^{q;\kappa}}\leq \Big[\frac{p}{p-q}\Big]^{\frac{1}{q}}\|f\|_{WM^{p;\kappa_{*}}}.
\end{equation*}
\end{thm}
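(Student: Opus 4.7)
The plan is to combine the distribution-function representation of the Lorentz--Morrey norm from Proposition \ref{prop24} with a two-regime pointwise bound on $d_{f;B}$, and then split the resulting integral at the threshold where the two bounds coincide. Fix a ball $B\subset\mathbb R^n$ and set $A:=m(B)^{\kappa_*/p}\|f\|_{LM^{p,\infty;\kappa_*}}$. By the definition of the $LM^{p,\infty;\kappa_*}$ norm together with Proposition \ref{prop21}, one obtains the weak-type estimate $\lambda\,d_{f;B}(\lambda)^{1/p}\le A$ for every $\lambda>0$. Combined with the trivial bound $d_{f;B}(\lambda)\le m(B)$, this yields
$$d_{f;B}(\lambda)\le \min\bigl\{m(B),\ A^p\lambda^{-p}\bigr\},$$
and the two bounds cross at $\lambda_0:=A/m(B)^{1/p}$.

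Next I would apply Proposition \ref{prop24} to $f\cdot\mathbf 1_B$ to write
$$\|f\|_{L^{q,r}(B)}^r=q\int_0^{\infty}\lambda^{r-1}d_{f;B}(\lambda)^{r/q}\,d\lambda,$$
and split at $\lambda_0$. On $(0,\lambda_0]$ using $d_{f;B}\le m(B)$ yields a contribution equal to $\frac{q}{r}A^r m(B)^{r(p-q)/(pq)}$. On $[\lambda_0,\infty)$ the bound $d_{f;B}(\lambda)\le A^p\lambda^{-p}$ produces an integrand proportional to $\lambda^{r-1-pr/q}$; since $p>q$ forces this exponent to be strictly below $-1$, the integral converges and evaluates to $\frac{q^2}{r(p-q)}A^r m(B)^{r(p-q)/(pq)}$. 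Summing the two pieces,
$$\|f\|_{L^{q,r}(B)}^r\le \frac{pq}{r(p-q)}\,A^r\,m(B)^{r(p-q)/(pq)}.$$

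To finish, I would take $r$-th roots, substitute $A=m(B)^{\kappa_*/p}\|f\|_{LM^{p,\infty;\kappa_*}}$, and invoke the scaling identity: the hypothesis $(1-\kappa)p=(1-\kappa_*)q$ is exactly equivalent to $\kappa_*/p+(p-q)/(pq)=\kappa/q$, so after dividing by $m(B)^{\kappa/q}$ the power of $m(B)$ cancels and one is left with the claimed constant $[pq/r(p-q)]^{1/r}$. Taking the supremum over all balls gives the general inequality; the special case $r=q$ collapses $LM^{q,q;\kappa}$ to $M^{q;\kappa}$ and recovers the stated $WM^{p;\kappa_*}\hookrightarrow M^{q;\kappa}$ embedding with constant $[p/(p-q)]^{1/q}$. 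The argument is a classical layer-cake split and presents no serious obstacle; the only points requiring care are choosing $\lambda_0$ exactly at the crossover of the two bounds (so both halves yield matching powers of $A$ and $m(B)$) and verifying that the hypothesis on $(\kappa,\kappa_*)$ is precisely what makes the remaining $m(B)$-exponent vanish.
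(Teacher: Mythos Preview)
Your argument is correct and follows essentially the same route as the paper's proof: both split the layer-cake integral for $\|f\|_{L^{q,r}(B)}^r$ at the crossover level, bound the low part by $d_{f;B}\le m(B)$ and the high part by the weak-$L^p$ tail estimate, and then invoke $(1-\kappa)p=(1-\kappa_*)q$ to cancel the residual power of $m(B)$. The only cosmetic difference is that the paper carries the local quantity $\|f\|_{L^{p,\infty}(\mathcal B)}$ through the computation and substitutes the global norm at the end, whereas you introduce $A=m(B)^{\kappa_*/p}\|f\|_{LM^{p,\infty;\kappa_*}}\ge\|f\|_{L^{p,\infty}(B)}$ from the outset; the resulting constants are identical.
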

The proof of Theorem \ref{appendix} will be given in the Appendix.
\begin{rem}
In general, for $1<p<\infty$ and $1\leq r\leq\infty$, the functional $\|\cdot\|_{LM^{p,r;\kappa}}$ is not a norm unless $r=p$, since it does not satisfy the triangle inequality. Actually, it is a quasi-norm since
\begin{equation*}
\begin{split}
\|(f+g)\cdot \mathbf{1}_{B}\|_{L^{p,r}}&=\|f\cdot \mathbf{1}_{B}+g\cdot \mathbf{1}_{B}\|_{L^{p,r}}\\
&\leq 2\big(\|f\cdot \mathbf{1}_{B}\|_{L^{p,r}}+\|g\cdot \mathbf{1}_{B}\|_{L^{p,r}}\big),
\end{split}
\end{equation*}
it is easy to verify that for any given functions $f,g\in LM^{p,r;\kappa}(\mathbb R^n)$,
\begin{equation*}
\|f+g\|_{LM^{p,r;\kappa}}\leq 2\big(\|f\|_{LM^{p,r;\kappa}}+\|g\|_{LM^{p,r;\kappa}}\big).
\end{equation*}
However, when $1<p<\infty$ and $1\leq r\leq\infty$, if we replace $f^{*}$ in the definition of $\|f\|_{LM^{p,r;\kappa}}$ with $f^{**}$, we get a quantity which is equivalent to $\|f\|_{LM^{p,r;\kappa}}$. To be more precise, define the quantity $\|f\|^{\ast}_{LM^{p,r;\kappa}}$ by
\begin{equation*}
\|f\|^{\ast}_{LM^{p,r;\kappa}}:=\sup_{B\subset\mathbb R^n}\frac{1}{m(B)^{\kappa/p}}\|f\|^{\ast}_{L^{p,r}(B)},
\end{equation*}
where the supremum is taken over all balls $B$ in $\mathbb R^n$ and
\begin{equation*}
\|f\|^{\ast}_{L^{p,r}(B)}:=
\begin{cases}
\displaystyle\bigg(\int_0^{\infty}\Big[s^{1/p}f^{**}_{B}(s)\Big]^r\frac{ds}{s}\bigg)^{1/r}, &\mbox{if}~ 1\leq r<\infty,\\
\displaystyle\sup_{s>0}\Big[s^{1/p}f^{**}_{B}(s)\Big], &\mbox{if}~ r=\infty.
\end{cases}
\end{equation*}
We can show that the functional $\|\cdot\|^{\ast}_{LM^{p,r;\kappa}}$ satisfies the triangle inequality, where $1<p<\infty$, $1\leq r\leq\infty$ and $0<\kappa<1$, and hence it is indeed a norm. In this situation, $LM^{p,r;\kappa}(\mathbb R^n)$ becomes a Banach space under the norm $\|\cdot\|^{\ast}_{LM^{p,r;\kappa}}$. Moreover,
\begin{equation*}
\|f\|_{LM^{p,r;\kappa}}\leq\|f\|^{\ast}_{LM^{p,r;\kappa}}\leq\frac{p}{p-1}\|f\|_{LM^{p,r;\kappa}},
\end{equation*}
when $1<p<\infty$, $1\leq r\leq\infty$ and $0<\kappa<1$. The proof is based on the corresponding estimate on the Lorentz spaces.
\end{rem}

Finally, we present the following three results needed for the proofs of our main theorems.
\begin{lem}[\cite{Kozono2}]\label{lemmaw}
There exists a positive constant $C(n)$ depending only on the dimension $n$ such that
\begin{equation*}
\|f\|_{\mathcal{W}}\leq C(n)\|f\|_{\mathrm{BMO}}
\end{equation*}
holds for all $f\in \mathrm{BMO}(\mathbb R^n)$.
\end{lem}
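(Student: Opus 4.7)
The plan is to reduce $\|f\|_{\mathcal{W}} \le C(n)\|f\|_{\mathrm{BMO}}$ to a pointwise control of the rearrangement gap by the Fefferman--Stein sharp maximal function
$$f^{\#}(x) := \sup_{B \ni x} \frac{1}{m(B)} \int_B |f(y) - f_B|\,dy,$$
which satisfies the trivial bound $\|f^{\#}\|_{L^\infty}\le 2\|f\|_{\mathrm{BMO}}$. The key ingredient is the Bennett--DeVore--Sharpley rearrangement inequality
$$f^{**}(t) - f^{*}(t) \le c_n (f^{\#})^{**}(t), \qquad t > 0.$$
Granting this, the estimate $(f^{\#})^{**}(t)\le\|f^{\#}\|_{L^\infty}\le 2\|f\|_{\mathrm{BMO}}$, followed by taking the supremum over $t>0$ in the definition of $\|\cdot\|_{\mathcal{W}}$, immediately yields the lemma with $C(n)=2c_n$.

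To prove the rearrangement inequality, I would fix $t > 0$ and set $\lambda := f^{*}(t)$, so that $m(\{|f|>\lambda\}) \le t$. Applying the Calder\'{o}n--Zygmund decomposition to $|f|$ at the stopping height $\lambda$ produces a family of pairwise disjoint dyadic cubes $\{Q_j\}$ with $|f|\le\lambda$ almost everywhere outside $\bigcup_j Q_j$ and $\lambda<\frac{1}{m(Q_j)}\int_{Q_j}|f|\,dy \le 2^n\lambda$. On each $Q_j$ the splitting $f=(f-f_{Q_j})+f_{Q_j}$ shows that the averages $|f_{Q_j}|$ contribute at most $2^n\lambda$, while the oscillation $\frac{1}{m(Q_j)}\int_{Q_j}|f-f_{Q_j}|\,dy$ is bounded by $\inf_{x \in Q_j} f^{\#}(x)$. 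Summing over $j$ and reorganising via the layer-cake principle gives
$$\int_0^t f^{*}(s)\,ds - t\,f^{*}(t) \le c_n \int_0^t (f^{\#})^{*}(s)\,ds,$$
and dividing by $t$ is precisely the desired inequality.

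The main obstacle is the bookkeeping in the Calder\'{o}n--Zygmund step, namely converting the sum over stopping cubes $\{Q_j\}$ into an integral along the rearrangement axis involving $(f^{\#})^{*}$. This rests on a standard comparison between averages over cubes of the Calder\'{o}n--Zygmund family and the decreasing rearrangement of $f^{\#}$, which demands careful tracking of dimensional constants but is by now routine in the theory of rearrangement-invariant spaces. As noted in the excerpt, the lemma is classical and proved in \cite{be1,be2}, with the version stated here explicitly employed in \cite{Kozono2}; accordingly, one may also simply invoke those references rather than reproduce the full argument.
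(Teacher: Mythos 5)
The paper gives no proof of this lemma at all; it is quoted from Kozono--Wadade, whose own proof rests on the Bennett--DeVore--Sharpley rearrangement inequality. Your reduction is exactly that classical route: $f^{**}(t)-f^{*}(t)\le c_n\,(f^{\#})^{**}(t)$ combined with $\|f^{\#}\|_{L^\infty}\le\|f\|_{\mathrm{BMO}}$ (the factor $2$ is superfluous, since the supremum defining $f^{\#}(x)$ runs over a subfamily of the balls defining $\|f\|_{\mathrm{BMO}}$) yields the lemma immediately, and your closing fallback to the references matches what the paper itself does. So as a citation-level justification the proposal is fine.

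The Calder\'on--Zygmund sketch you offer for the rearrangement inequality, however, would not close as written. After splitting $f=(f-f_{Q_j})+f_{Q_j}$, the bound $|f_{Q_j}|\le 2^{n}\lambda$ leaves, upon summation, an error term of size $(2^{n}-1)\lambda\, m\bigl(\bigcup_j Q_j\bigr)$, which is of order $t\,f^{*}(t)$ rather than $\int_0^t (f^{\#})^{*}(s)\,ds$; since $f^{*}(t)$ is in general far larger than $(f^{\#})^{**}(t)$, this term cannot be absorbed. The missing idea is to compare $f_{Q_j}$ with the average over the dyadic parent $\widetilde{Q}_j$, whose average of $|f|$ is at most $\lambda$ by the stopping rule: this gives $|f_{Q_j}|\le\lambda+2^{n}\inf_{x\in Q_j}f^{\#}(x)$, so the excess of the averages over $\lambda$ is itself controlled by $f^{\#}$ and can join the oscillation term. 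A second point your sketch elides is that $\bigcup_j Q_j$ is a level set of the dyadic maximal function, not of $|f|$, so $m\bigl(\bigcup_j Q_j\bigr)\lesssim t$ does not follow from $d_f(\lambda)\le t$ alone; one needs an input such as Herz's equivalence $(Mf)^{*}(t)\approx f^{**}(t)$, or a stopping height tied to $f^{**}(t)$ instead of $f^{*}(t)$. Since you explicitly defer this bookkeeping to \cite{be1,be2}, the proposal is acceptable as a reference to the literature, but the sketch itself should not be mistaken for a proof.
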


Recall that for $\alpha>0$, the gamma function is defined by
\begin{equation*}
\Gamma(\alpha):=\int_0^{\infty}\mu^{\alpha-1} e^{-\mu}\,d\mu.
\end{equation*}
For all $t>0$, a change of variables implies that if $\mu=\log(t/s)$, then
\begin{equation}\label{weneed}
\bigg(\int_0^{t}\Big[\log\frac{t}{\,s\,}\Big]^qds\bigg)^{1/q}
=\bigg(\int_0^{\infty}t\cdot\mu^q e^{-\mu}\,d\mu\bigg)^{1/q}=t^{1/q}\cdot\Gamma(q+1)^{1/q},
\end{equation}
when $1\leq q<\infty$. In addition, we have the following elementary inequality, which says that for any two positive numbers $A,B$ and $1\leq q<\infty$,
\begin{equation}\label{elemen}
A^{1/q}+B^{1/q}\leq 2^{1-1/q}\cdot(A+B)^{1/q}.
\end{equation}

\section{Interpolation inequalities between Lorentz, $L^{\infty}$ and BMO spaces}
\subsection{Main results}
The main results of this section are stated as follows.

\begin{thm}\label{thmwang1}
Let $n\in \mathbb{N}$, $1\leq p<\infty$ and $p<r\leq\infty$. Then the following statements hold.

$(1)$ If $r<\infty$ and $f\in L^{p,r}(\mathbb R^n)\cap{L^\infty}(\mathbb R^n)$, then for every $q$ with $r<q<\infty$, we have $f\in L^q(\mathbb R^n)$, and the interpolation inequality
\begin{equation*}
\|f\|_{L^q}\leq 2\cdot\big(\|f\|_{L^{p,r}}\big)^{p/q}\cdot\big(\|f\|_{L^\infty}\big)^{1-p/q}
\end{equation*}
holds for all $f\in L^{p,r}(\mathbb R^n)\cap{L^\infty}(\mathbb R^n)$.

$(2)$ If $r=\infty$ and $f\in L^{p,r}(\mathbb R^n)\cap{L^\infty}(\mathbb R^n)$, then for every $q$ with $p<q<\infty$, we have $f\in L^q(\mathbb R^n)$, and the interpolation inequality
\begin{equation*}
\|f\|_{L^q}\leq \Big[2^{1-1/q}\Big(\frac{q}{q-p}\Big)^{1/q}\Big]
\cdot\big(\|f\|_{L^{p,\infty}}\big)^{p/q}\cdot\big(\|f\|_{L^\infty}\big)^{1-p/q}
\end{equation*}
holds for all $f\in L^{p,\infty}(\mathbb R^n)\cap{L^\infty}(\mathbb R^n)$.
\end{thm}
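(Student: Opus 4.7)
The plan is to split the integral $\|f\|_{L^q}^q=\int_0^\infty[f^{*}(t)]^q\,dt=I_1+I_2$ at a threshold $t_0$ chosen so that the two pieces balance, and to combine them via the subadditivity
\[(I_1+I_2)^{1/q}\leq I_1^{1/q}+I_2^{1/q},\qquad q\geq 1,\]
which follows from $(a+b)^q\geq a^q+b^q$. In both parts the trivial bound $I_1\leq \|f\|_{L^\infty}^q\cdot t_0$ comes from $f^{*}(t)\leq\|f\|_{L^\infty}$.

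For Part~$(1)$ I will take $t_0:=\|f\|_{L^{p,r}}^p/\|f\|_{L^\infty}^p$. To bound $I_2$, the hypothesis $r<q$ lets me factor $[f^{*}]^q=[f^{*}]^{q-r}\cdot[f^{*}]^r$ and pull out $[f^{*}]^{q-r}\leq\|f\|_{L^\infty}^{q-r}$. For the remaining tail I would rewrite $[f^{*}(t)]^r=[t^{1/p}f^{*}(t)]^r\cdot t^{-r/p}$ and exploit that, since $r>p$, the function $t\mapsto t^{1-r/p}$ is \emph{decreasing}; hence $t^{-r/p}=t^{1-r/p}\cdot t^{-1}\leq t_0^{1-r/p}\cdot t^{-1}$ on $[t_0,\infty)$, and therefore
\[\int_{t_0}^\infty[f^{*}(t)]^r\,dt\leq t_0^{1-r/p}\int_0^\infty\bigl[t^{1/p}f^{*}(t)\bigr]^r\,\frac{dt}{t}=t_0^{1-r/p}\,\|f\|_{L^{p,r}}^r.\]
A direct computation with the chosen $t_0$ then gives the clean identity $I_1=I_2=\|f\|_{L^{p,r}}^p\|f\|_{L^\infty}^{q-p}$, and subadditivity yields $\|f\|_{L^q}\leq I_1^{1/q}+I_2^{1/q}=2\,\|f\|_{L^{p,r}}^{p/q}\,\|f\|_{L^\infty}^{1-p/q}$.

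For Part~$(2)$ the pointwise estimate $f^{*}(t)\leq \|f\|_{L^{p,\infty}}\,t^{-1/p}$ (immediate from $\|f\|_{L^{p,\infty}}=\sup_{s>0}s^{1/p}f^{*}(s)$) replaces the tail argument above. With $t_0:=\|f\|_{L^{p,\infty}}^p/\|f\|_{L^\infty}^p$ a direct computation gives $I_1=\|f\|_{L^{p,\infty}}^p\|f\|_{L^\infty}^{q-p}$ and $I_2\leq\|f\|_{L^{p,\infty}}^q\int_{t_0}^\infty t^{-q/p}\,dt=\frac{p}{q-p}\|f\|_{L^{p,\infty}}^p\|f\|_{L^\infty}^{q-p}$, so subadditivity produces the raw bound $\bigl(1+[p/(q-p)]^{1/q}\bigr)\,\|f\|_{L^{p,\infty}}^{p/q}\|f\|_{L^\infty}^{1-p/q}$. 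The stated constant $2^{1-1/q}[q/(q-p)]^{1/q}$ then follows by applying the elementary inequality \eqref{elemen} with $A=1$ and $B=p/(q-p)$.

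The hard part is the tail estimate in Part~$(1)$: reducing $\int_{t_0}^\infty[f^{*}]^r\,dt$ to a scalar multiple of the genuine $L^{p,r}$ norm via the monotonicity bound $t^{-r/p}\leq t_0^{1-r/p}t^{-1}$ on $[t_0,\infty)$, which crucially uses $r>p$. This is what forces $I_1$ and $I_2$ to coincide at the balancing $t_0$ and produces the clean constant $2$; the same maneuver is unavailable when $r=\infty$, which is why Part~$(2)$ retains the $q$-dependent factor $[q/(q-p)]^{1/q}$.
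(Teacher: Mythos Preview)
Your proof is correct and follows essentially the same approach as the paper: the same split of $\int_0^\infty [f^*(s)]^q\,ds$ at the same threshold $t_0=(\|f\|_{L^{p,r}}/\|f\|_{L^\infty})^p$, the same $L^\infty$ bound on the short piece, the same monotonicity trick $s^{1-r/p}\leq t_0^{1-r/p}$ (using $r>p$) to control the tail in Part~(1), the same pointwise bound $f^*(s)\leq \|f\|_{L^{p,\infty}}s^{-1/p}$ in Part~(2), and the same use of \eqref{elemen} to pass from $1+[p/(q-p)]^{1/q}$ to $2^{1-1/q}[q/(q-p)]^{1/q}$. The only cosmetic difference is that the paper works with the $(1/q)$-th powers $I_1(t),I_2(t)$ from the outset, whereas you work with the $q$-th powers and invoke $(I_1+I_2)^{1/q}\leq I_1^{1/q}+I_2^{1/q}$; these are equivalent. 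One small wording issue: where you write ``the clean identity $I_1=I_2=\|f\|_{L^{p,r}}^p\|f\|_{L^\infty}^{q-p}$'' you mean that the \emph{upper bounds} on $I_1$ and $I_2$ coincide, not the integrals themselves.
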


\begin{thm}\label{strong}
Let $n\in \mathbb{N}$, $1<p<\infty$ and $p\leq r\leq\infty$. If $p\leq r<\infty$ and $f\in L^{p,r}(\mathbb R^n)\cap \mathcal{W}$, then for every $q$ with $r<q<\infty$(or if $f\in L^{p,\infty}(\mathbb R^n)\cap \mathcal{W}$, then for every $q$ with $p<q<\infty$), we have $f\in L^q(\mathbb R^n)$, and the following interpolation inequality
\begin{equation*}
\|f\|_{L^q}\leq C\cdot q\big(\|f\|_{L^{p,r}}\big)^{p/q}\cdot\big(\|f\|_{\mathcal{W}}\big)^{1-p/q}
\end{equation*}
holds for all $f\in L^{p,r}(\mathbb R^n)\cap \mathcal{W}$. Here $C$ is an absolute positive constant independent of $q$.
\end{thm}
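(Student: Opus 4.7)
The plan is to combine two estimates. The first is the pointwise bound for $f^{**}$ already stated in the introduction, namely
\[
f^{**}(t) \leq \Big(\frac{p'}{r'}\Big)^{1/r'} \frac{1}{t^{1/p}} \|f\|_{L^{p,r}}, \qquad t>0,
\]
which in particular yields $f^{*}(t)\leq f^{**}(t)\leq C_{p,r}\|f\|_{L^{p,r}}\,t^{-1/p}$. The second is a logarithmic comparison extracted from the $\mathcal{W}$-seminorm: differentiating $tf^{**}(t)=\int_{0}^{t}f^{*}(s)\,ds$ gives $(f^{**})'(t)=[f^{*}(t)-f^{**}(t)]/t$, so \eqref{geq} together with the very definition of $\|\cdot\|_{\mathcal{W}}$ yields $-\|f\|_{\mathcal{W}}/t\leq (f^{**})'(t)\leq 0$. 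Integrating on $[t,s]$ with $0<t<s$ then gives
\[
f^{**}(t)-f^{**}(s) \leq \|f\|_{\mathcal{W}}\,\log\frac{s}{t}.
\]

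With these ingredients I would pick an auxiliary parameter $T>0$ and combine them: for $0<t\leq T$,
\[
f^{*}(t) \leq f^{**}(t) \leq C_{p,r}\|f\|_{L^{p,r}}\,T^{-1/p} + \|f\|_{\mathcal{W}}\,\log(T/t),
\]
and then split the layer-cake identity $\|f\|_{L^{q}}^{q}=\int_{0}^{\infty}[f^{*}(t)]^{q}\,dt$ at $t=T$. On the tail $(T,\infty)$ the pointwise bound on $f^{**}$ yields
\[
\int_{T}^{\infty}[f^{*}(t)]^{q}\,dt \leq \frac{C_{p,r}^{q}}{q/p-1}\,\|f\|_{L^{p,r}}^{q}\,T^{1-q/p},
\]
with convergence guaranteed by $q>p$. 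On the head $(0,T]$, raising the combined bound to the $q$-th power via the elementary inequality $(a+b)^{q}\leq 2^{q-1}(a^{q}+b^{q})$, integrating, and invoking the gamma-function identity \eqref{weneed} produces
\[
\int_{0}^{T}[f^{*}(t)]^{q}\,dt \leq 2^{q-1}\Big[C_{p,r}^{q}\|f\|_{L^{p,r}}^{q}\,T^{1-q/p} + \|f\|_{\mathcal{W}}^{q}\,\Gamma(q+1)\,T\Big].
\]

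The final step is to optimize in $T$: the two surviving terms are of orders $\|f\|_{L^{p,r}}^{q}T^{1-q/p}$ and $\|f\|_{\mathcal{W}}^{q}\,\Gamma(q+1)\,T$, and the balance point is $T\approx(\|f\|_{L^{p,r}}/\|f\|_{\mathcal{W}})^{p}\,\Gamma(q+1)^{-p/q}$, which plugs back to give
\[
\|f\|_{L^{q}}^{q} \lesssim \|f\|_{L^{p,r}}^{p}\,\|f\|_{\mathcal{W}}^{q-p}\,\Gamma(q+1)^{(q-p)/q}.
\]
Taking $q$-th roots and invoking Stirling's formula in the form $\Gamma(q+1)^{1/q}\leq Cq$ then converts the remaining gamma factor into the linear rate $Cq$, giving $\|f\|_{L^{q}}\leq Cq\,\|f\|_{L^{p,r}}^{p/q}\|f\|_{\mathcal{W}}^{1-p/q}$. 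The case $r=\infty$ is handled identically, with $(p'/r')^{1/r'}$ replaced by $p'$ throughout. The main obstacle is \emph{tracking the $q$-dependence} of every constant through the argument, so that the final bound exhibits the asserted \emph{linear} order $Cq$ rather than a larger power; a crude balancing or a loose application of Stirling would easily degrade this to a higher growth rate. A minor routine point is checking that the $2^{q-1}$ factor from the elementary inequality contributes at most $2$ after extracting $q$-th roots, so that it is harmless.
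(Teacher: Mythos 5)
Your proposal is correct and follows essentially the same route as the paper's proof: the same two key estimates (the bound $f^{**}(t)\le (p'/r')^{1/r'}t^{-1/p}\|f\|_{L^{p,r}}$ of \eqref{key1} and the logarithmic oscillation bound \eqref{key2} obtained from \eqref{important}), the same split of the layer-cake integral at a parameter to be optimized, and the same use of \eqref{weneed} plus Stirling to extract the linear factor $q$; your use of $(a+b)^q\le 2^{q-1}(a^q+b^q)$ in place of Minkowski and your inclusion of $\Gamma(q+1)^{-p/q}$ in the balancing point are immaterial, as you note. The only substantive deviation is the tail: you apply the crude pointwise bound $f^{*}(t)\le C_{p,r}\,t^{-1/p}\|f\|_{L^{p,r}}$ for every $r$, which is exactly what the paper does when $r=\infty$ but costs a factor $\bigl(\tfrac{p}{q-p}\bigr)^{1/q}$; this is $\mathcal{O}(1)$ as $q\to\infty$ and bounded for $q>r>p$, yet, unlike the paper's more careful factorization $[s^{1/p-1/r}f^{*}(s)]^{r}f^{*}(s)^{q-r}s^{1-r/p}$ in the case $r<\infty$, it is not uniform as $q\downarrow p$ when $r=p$ --- a blemish the paper's own $r=\infty$ case shares and which does not affect the asserted linear growth rate as $q\to\infty$.
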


As an immediate consequence of Theorem $\ref{strong}$ and Lemma \ref{lemmaw}, we have the following result.
\begin{cor}\label{cor33}
Let $n\in \mathbb{N}$, $1<p<\infty$ and $p\leq r\leq\infty$. If $p\leq r<\infty$ and $f\in L^{p,r}(\mathbb R^n)\cap \mathrm{BMO}(\mathbb R^n)$, then for every $q$ with $r<q<\infty$(or if $f\in L^{p,\infty}(\mathbb R^n)\cap \mathrm{BMO}(\mathbb R^n)$, then for every $q$ with $p<q<\infty$), we have $f\in L^q(\mathbb R^n)$, and the following interpolation inequality
\begin{equation*}
\|f\|_{L^q}\leq C\cdot q\big(\|f\|_{L^{p,r}}\big)^{p/q}\cdot\big(\|f\|_{\mathrm{BMO}}\big)^{1-p/q}
\end{equation*}
holds for all $f\in L^{p,r}(\mathbb R^n)\cap \mathrm{BMO}(\mathbb R^n)$ with an absolute positive constant $C$ independent of $q$.
\end{cor}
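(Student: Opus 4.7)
The corollary is announced as an immediate consequence of Theorem \ref{strong} and Lemma \ref{lemmaw}, so the plan is simply to chain these two results. First I would use Lemma \ref{lemmaw} to observe that every $f \in \mathrm{BMO}(\mathbb{R}^n)$ belongs to $\mathcal{W}$, with the quantitative estimate
\[
\|f\|_{\mathcal{W}} \leq C(n)\,\|f\|_{\mathrm{BMO}}.
\]
Thus, under the hypothesis $f \in L^{p,r}(\mathbb{R}^n) \cap \mathrm{BMO}(\mathbb{R}^n)$, we automatically have $f \in L^{p,r}(\mathbb{R}^n) \cap \mathcal{W}$, and Theorem \ref{strong} applies to $f$ in both cases ($p \leq r < \infty$ with $r < q < \infty$, and $r = \infty$ with $p < q < \infty$).

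Next I would invoke Theorem \ref{strong} to obtain
\[
\|f\|_{L^q} \leq C \cdot q \big(\|f\|_{L^{p,r}}\big)^{p/q} \cdot \big(\|f\|_{\mathcal{W}}\big)^{1-p/q}
\]
with an absolute constant $C$ independent of $q$. Substituting the BMO bound from Lemma \ref{lemmaw} into the $\mathcal{W}$-factor yields
\[
\|f\|_{L^q} \leq C \cdot q \big(\|f\|_{L^{p,r}}\big)^{p/q} \cdot \big[C(n)\big]^{1-p/q}\big(\|f\|_{\mathrm{BMO}}\big)^{1-p/q}.
\]
Since the exponent $1 - p/q$ lies in the interval $(0,1)$ for all admissible $q$, we have $\big[C(n)\big]^{1-p/q} \leq \max\{1,C(n)\}$ uniformly in $q$; absorbing this dimensional factor into the constant $C$ produces the desired inequality with a constant still independent of $q$.

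There is no real obstacle here: the only point that deserves attention is the verification that the extra factor $\big[C(n)\big]^{1-p/q}$ introduced by the BMO-to-$\mathcal{W}$ embedding remains bounded by an absolute (i.e., $q$-independent) constant, which is immediate since $1 - p/q < 1$. The whole argument is therefore a one-line substitution, and the content of the corollary is genuinely carried by Theorem \ref{strong} together with the embedding $\mathrm{BMO}(\mathbb{R}^n) \hookrightarrow \mathcal{W}$ supplied by Lemma \ref{lemmaw}.
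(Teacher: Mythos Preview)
Your proposal is correct and matches the paper's intended argument exactly: the paper states the corollary as ``an immediate consequence of Theorem~\ref{strong} and Lemma~\ref{lemmaw}'' without spelling out the substitution, and your write-up supplies precisely that one-line chaining, including the observation that $[C(n)]^{1-p/q}$ is bounded uniformly in $q$.
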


\subsection{Proofs of Theorems $\ref{thmwang1}$ and $\ref{strong}$}
\begin{proof}[Proof of Theorem $\ref{thmwang1}$]
Let $f\in L^{p,r}(\mathbb R^n)\cap{L^\infty}(\mathbb R^n)$ with $1\leq p<\infty$ and $p<r\leq\infty$. For any $p<q<\infty$, in view of \eqref{equality2}, we have
\begin{equation*}
\begin{split}
\|f\|_{L^q}&=\bigg(\int_0^{\infty}\big[f^{*}(s)\big]^qds\bigg)^{1/q}\\
&\leq\bigg(\int_0^{t}\big[f^{*}(s)\big]^qds\bigg)^{1/q}+\bigg(\int_{t}^{\infty}\big[f^{*}(s)\big]^qds\bigg)^{1/q}\\
&:=I_1(t)+I_2(t),
\end{split}
\end{equation*}
where $t$ is a positive constant to be determined later. Recall that for any $s>0$,
\begin{equation}\label{often1}
f^{*}(s)\leq \sup_{s>0}f^{*}(s)=\|f\|_{L^{\infty}}.
\end{equation}
Consequently,
\begin{equation*}
I_1(t)\leq\|f\|_{L^\infty}\cdot t^{1/q}.
\end{equation*}
(1) When $r<\infty$ and $p<r<q<\infty$, in this case, it follows directly from \eqref{often1} that
\begin{equation*}
\begin{split}
I_2(t)&=\bigg(\int_{t}^{\infty}s^{r/p-1}\big[f^{*}(s)\big]^{r}\cdot\big[f^{*}(s)\big]^{q-r}\frac{1}{s^{r/p-1}}ds\bigg)^{1/q}\\
&\leq\bigg(\int_{t}^{\infty}\Big[s^{1/p}f^{*}(s)\Big]^r\frac{ds}{s}\bigg)^{1/q}
\cdot\big(\|f\|_{L^\infty}\big)^{1-r/q}\Big(\frac{1}{t^{r/p-1}}\Big)^{1/q},
\end{split}
\end{equation*}
where in the last inequality we have used the facts that $q-r>0$ and $r/p-1>0$. Furthermore, by the definition of $L^{p,r}(\mathbb R^n)$, we obtain
\begin{equation*}
\begin{split}
I_2(t)&\leq\bigg(\int_{0}^{\infty}\Big[s^{1/p}f^{*}(s)\Big]^r\frac{ds}{s}\bigg)^{1/q}
\cdot\big(\|f\|_{L^\infty}\big)^{1-r/q}\Big(\frac{1}{t^{r/p-1}}\Big)^{1/q}\\
&=\big(\|f\|_{L^{p,r}}\big)^{r/q}\big(\|f\|_{L^\infty}\big)^{1-r/q}\cdot\Big(\frac{1}{t^{r/p-1}}\Big)^{1/q}.
\end{split}
\end{equation*}
We now choose $t>0$ such that
\begin{equation*}
\big(\|f\|_{L^{p,r}}\big)^{r/q}\big(\|f\|_{L^\infty}\big)^{1-r/q}\cdot\Big(\frac{1}{t^{r/p-1}}\Big)^{1/q}
=\|f\|_{L^\infty}\cdot t^{1/q},
\end{equation*}
that is,
\begin{equation*}
t:=\left(\frac{\|f\|_{L^{p,r}}}{\|f\|_{L^\infty}}\right)^p.
\end{equation*}
Therefore,
\begin{equation*}
\|f\|_{L^q}\leq 2\cdot\big(\|f\|_{L^{p,r}}\big)^{p/q}\cdot\big(\|f\|_{L^\infty}\big)^{1-p/q}.
\end{equation*}
(2) When $r=\infty$ and $p<q<\infty$, in this case, it follows from the definition of $L^{p,\infty}(\mathbb R^n)$ and the fact $q/p>1$ that
\begin{equation*}
\begin{split}
I_2(t)&=\bigg(\int_{t}^{\infty}\big[s^{1/p}f^{*}(s)\big]^{q}\frac{1}{s^{q/p}}ds\bigg)^{1/q}\\
&\leq\Big[\sup_{s>0}s^{1/p}f^{*}(s)\Big]\cdot\bigg(\int_{t}^{\infty}\frac{1}{s^{q/p}}ds\bigg)^{1/q}\\
&=\|f\|_{L^{p,\infty}}\cdot\Big(\frac{1}{q/p-1}\Big)^{1/q} t^{1/q-1/p}.
\end{split}
\end{equation*}
We may choose $t>0$ such that
\begin{equation*}
\|f\|_{L^{p,\infty}}\cdot t^{1/q-1/p}=\|f\|_{L^\infty}\cdot t^{1/q},
\end{equation*}
that is,
\begin{equation*}
t:=\left(\frac{\|f\|_{L^{p,\infty}}}{\|f\|_{L^\infty}}\right)^p.
\end{equation*}
Therefore, in view of \eqref{elemen}, we conclude that
\begin{equation*}
\begin{split}
\|f\|_{L^q}&\leq\Big[\Big(\frac{p}{q-p}\Big)^{1/q}+1\Big]
\cdot\big(\|f\|_{L^{p,\infty}}\big)^{p/q}\cdot\big(\|f\|_{L^\infty}\big)^{1-p/q}\\
&\leq \Big[2^{1-1/q}\Big(\frac{q}{q-p}\Big)^{1/q}\Big]
\cdot\big(\|f\|_{L^{p,\infty}}\big)^{p/q}\cdot\big(\|f\|_{L^\infty}\big)^{1-p/q}.
\end{split}
\end{equation*}
This finishes the proof of Theorem $\ref{thmwang1}$.
\end{proof}

\begin{rem}
For $1\leq p<\infty$, it is easy to verify that
\begin{equation}\label{22}
\lim_{q\to\infty}2^{1-1/q}\Big(\frac{q}{q-p}\Big)^{1/q}=2,
\end{equation}
that is to say $2^{1-1/q}\big(\frac{q}{q-p}\big)^{1/q}=\mathcal{O}(2)$, as $q\to\infty$. Theorem $\ref{thmwang1}$ tells us that the embedding constant for the case $p<r<\infty$ is 2 and the asymptotically embedding constant for the case $p<r=\infty$ is also 2.
\end{rem}

\begin{proof}[Proof of Theorem $\ref{strong}$]
Let $f\in L^{p,r}(\mathbb R^n)\cap \mathcal{W}$ with $1<p<\infty$ and $p\leq r\leq\infty$, and let $p'$ be the exponent conjugate to $p$. First we claim that for any $t>0$,
\begin{equation}\label{key1}
f^{**}(t)\leq\Big(\frac{p'}{r'}\Big)^{1/{r'}}
\frac{1}{t^{1/p}}\cdot\|f\|_{L^{p,r}}.
\end{equation}
Here $r'=1$ when $r=\infty$.
In fact, we first suppose $p\leq r<\infty$. For any $t>0$, by using H\"{o}lder's inequality with exponent $r$, we obtain
\begin{equation*}
\begin{split}
\int_0^{t}f^{*}(\tau)\,d\tau&=\int_0^{t}\tau^{1/p-1/r}f^{*}(\tau)\cdot\tau^{1/r-1/p}\,d\tau\\
&\leq\bigg(\int_0^{t}\Big[\tau^{1/p}f^{*}(\tau)\Big]^r\frac{d\tau}{\tau}\bigg)^{1/r}
\times\bigg(\int_0^{t}\tau^{(1/r-1/p)r'}d\tau\bigg)^{1/{r'}}\\
&\leq \|f\|_{L^{p,r}}\bigg(\int_0^{t}\tau^{(1/r-1/p)r'}d\tau\bigg)^{1/{r'}}.
\end{split}
\end{equation*}
Recall that $p>1$. A simple computation leads to that
\begin{equation*}
(1/r-1/p)r'+1>0.
\end{equation*}
Then we have
\begin{equation*}
\begin{split}
\bigg(\int_0^{t}\tau^{(1/r-1/p)r'}\,d\tau\bigg)^{1/{r'}}
&=\Big[\frac{1}{(1/r-1/p)r'+1}\cdot t^{(1/r-1/p)r'+1}\Big]^{1/{r'}}\\
&=\Big(\frac{1}{r'}\Big)^{1/{r'}}\Big(\frac{1}{1-1/p}\Big)^{1/{r'}}\cdot t^{1-1/p}.
\end{split}
\end{equation*}
Hence, for any $t>0$,
\begin{equation*}
\begin{split}
f^{**}(t)&=\frac{\,1\,}{t}\int_0^{t}f^{*}(\tau)\,d\tau\\
&\leq\frac{\,1\,}{t}\Big(\frac{1}{r'}\Big)^{1/{r'}}\big(p'\big)^{1/{r'}}
\cdot t^{1-1/p}\|f\|_{L^{p,r}}\\
&=\Big(\frac{p'}{r'}\Big)^{1/{r'}}\frac{1}{t^{1/p}}\cdot\|f\|_{L^{p,r}},
\end{split}
\end{equation*}
as desired. Now we suppose $r=\infty$. In this case, it is easy to verify that
\begin{equation*}
\begin{split}
\int_0^{t}f^{*}(\tau)\,d\tau&=\int_0^{t}\tau^{1/p}f^{*}(\tau)\cdot\tau^{-1/p}\,{d\tau}\\
&\leq\Big[\sup_{\tau>0}\tau^{1/p}f^{*}(\tau)\Big]\cdot\int_0^{t}\tau^{-1/p}\,{d\tau}\\
&=\|f\|_{L^{p,\infty}}\frac{p}{p-1}\cdot t^{1-1/p},
\end{split}
\end{equation*}
where in the last step we have used the fact that $p>1$. Therefore, for any $t>0$,
\begin{equation*}
\begin{split}
f^{**}(t)&=\frac{\,1\,}{t}\int_0^{t}f^{*}(\tau)\,d\tau\\
&\leq p'\frac{1}{t^{1/p}}\cdot\|f\|_{L^{p,\infty}}.
\end{split}
\end{equation*}
This proves \eqref{key1}. It should be pointed out that inequality \eqref{key1} fails when $p=1$. On the other hand, since
\begin{equation}\label{important}
\begin{split}
\frac{d}{d\tau}\big(f^{**}(\tau)\big)&=-\frac{1}{\tau^2}\int_0^{\tau}f^{*}(\eta)\,d\eta+\frac{f^{*}(\tau)}{\tau}\\
&=\frac{f^{*}(\tau)-1/{\tau}\int_0^{\tau}f^{*}(\eta)\,d\eta}{\tau}\\
&=\frac{f^{*}(\tau)-f^{**}(\tau)}{\tau},
\end{split}
\end{equation}
so we have
\begin{equation*}
-\frac{d}{d\tau}\big(f^{**}(\tau)\big)=\frac{f^{**}(\tau)-f^{*}(\tau)}{\tau}\leq\frac{\|f\|_{\mathcal{W}}}{\tau},
\end{equation*}
for given $f\in \mathcal{W}$. From this, it follows that for any $0<s<t$,
\begin{equation*}
\begin{split}
f^{**}(s)-f^{**}(t)&=\int_{s}^t-\frac{d}{d\tau}\big(f^{**}(\tau)\big)\,d\tau\\
&\leq\int_{s}^t\frac{\|f\|_{\mathcal{W}}}{\tau}\,d\tau\\
&=\|f\|_{\mathcal{W}}\cdot\big[\log t-\log s\big]=\|f\|_{\mathcal{W}}\cdot\log\frac{t}{\,s\,},
\end{split}
\end{equation*}
which in turn implies that
\begin{equation}\label{key2}
f^{**}(s)\leq f^{**}(t)+\|f\|_{\mathcal{W}}\cdot\log\frac{t}{\,s\,},
\end{equation}
whenever $0<s<t$. For any $1<q<\infty$, in view of \eqref{equality2}, we can write
\begin{equation*}
\begin{split}
\|f\|_{L^q}&=\bigg(\int_0^{\infty}\big[f^{*}(s)\big]^qds\bigg)^{1/q}\\
&\leq\bigg(\int_0^{t}\big[f^{*}(s)\big]^qds\bigg)^{1/q}+
\bigg(\int_{t}^{\infty}\big[f^{*}(s)\big]^qds\bigg)^{1/q}\\
&:=J_1(t)+J_2(t),
\end{split}
\end{equation*}
where $t$ is a positive constant to be fixed below. Let us estimate the first term $J_1(t)$. By using \eqref{geq}, \eqref{key2} and Minkowski's inequality, we can deduce that
\begin{equation*}
\begin{split}
J_1(t)&\leq\bigg(\int_0^{t}\big[f^{**}(s)\big]^qds\bigg)^{1/q}\\
&\leq\bigg(\int_0^{t}\big[f^{**}(t)\big]^qds\bigg)^{1/q}
+\bigg(\int_0^{t}\Big[\|f\|_{\mathcal{W}}\cdot\log\frac{t}{\,s\,}\Big]^qds\bigg)^{1/q}\\
&=t^{1/q}\cdot f^{**}(t)+\|f\|_{\mathcal{W}}
\cdot\bigg(\int_0^{t}\Big[\log\frac{t}{\,s\,}\Big]^qds\bigg)^{1/q}.\\
\end{split}
\end{equation*}
Recall that the Stirling formula
\begin{equation*}
\Gamma(q+1)\approx\sqrt{2\pi}q^{q+\frac{\,1\,}{2}}\exp(-q)=\sqrt{2\pi q}\Big(\frac{q}{\,e\,}\Big)^q
\end{equation*}
holds as $q\to\infty$. Hence, we find that as $q\to\infty$,
\begin{equation}\label{aso}
\Gamma(q+1)^{1/q}=\mathcal{O}(q),
\end{equation}
that is, $\Gamma(q+1)^{1/q}\leq C\cdot q$, when $q$ is large. Here $C>0$ is an absolute constant independent of $q$. This fact, together with \eqref{weneed} and \eqref{key1}, gives us that
\begin{equation*}
\begin{split}
J_1(t)&\leq C\cdot q\Big[t^{1/q-1/p}\cdot\|f\|_{L^{p,r}}+\|f\|_{\mathcal{W}}\cdot t^{1/q}\Big],
\end{split}
\end{equation*}
where $C>0$ is independent of $f$ and $q>1$. We now estimate the second term $J_2(t)$.

$(1)$ When $r<\infty$ and $p\leq r<q<\infty$, since $f^{*}$ is non-increasing and $r/p-1\geq0$, we have
\begin{equation*}
\begin{split}
J_2(t)&=\bigg(\int_{t}^{\infty}\big[s^{1/p-1/r}f^{*}(s)\big]^r\cdot f^{*}(s)^{q-r}s^{1-r/p}ds\bigg)^{1/q}\\
&\leq\bigg(\int_{t}^{\infty}\big[s^{1/p-1/r}f^{*}(s)\big]^rds\bigg)^{1/q}\cdot \Big[f^{*}(t)^{q-r}t^{1-r/p}\Big]^{1/q}\\
&\leq\bigg(\int_{0}^{\infty}\big[s^{1/p}f^{*}(s)\big]^r\frac{ds}{s}\bigg)^{1/q}\cdot \Big[f^{*}(t)^{q-r}t^{1-r/p}\Big]^{1/q}.
\end{split}
\end{equation*}
Moreover, by \eqref{geq}, \eqref{key1} and the fact that $q-r>0$, we thus obtain
\begin{equation*}
\begin{split}
J_2(t)&\leq\big(\|f\|_{L^{p,r}}\big)^{r/q}
\cdot \Big[f^{**}(t)^{q-r}t^{1-r/p}\Big]^{1/q}\\
&\leq \big(\|f\|_{L^{p,r}}\big)^{r/q}
\cdot \Big[\Big(\frac{p'}{r'}\Big)^{1/{r'}}\Big]^{1-r/q}
t^{1/q-1/p}\big(\|f\|_{L^{p,r}}\big)^{1-r/q}\\
&=\Big[\Big(\frac{p'}{r'}\Big)^{1/{r'}}\Big]^{1-r/q}
t^{1/q-1/p}\cdot\|f\|_{L^{p,r}}.
\end{split}
\end{equation*}
$(2)$ When $r=\infty$ and $p<q<\infty$, in this case, we have
\begin{equation*}
\begin{split}
J_2(t)&=\bigg(\int_{t}^{\infty}\big[s^{1/p}f^{*}(s)\big]^{q}\frac{1}{s^{q/p}}ds\bigg)^{1/q}\\
&\leq\Big[\sup_{s>0}s^{1/p}f^{*}(s)\Big]\cdot\bigg(\int_{t}^{\infty}\frac{1}{s^{q/p}}ds\bigg)^{1/q}\\
&=\Big(\frac{p}{q-p}\Big)^{1/q} t^{1/q-1/p}\cdot\|f\|_{L^{p,\infty}},
\end{split}
\end{equation*}
where the last integral is convergent since $q/p>1$. Notice that $p'\geq r'$. Thus, for all $q$ with $p<q<\infty$,
\begin{equation*}
\Big[\Big(\frac{p'}{r'}\Big)^{1/{r'}}\Big]^{1-r/q}\leq\Big(\frac{p'}{r'}\Big)^{1/{r'}}.
\end{equation*}
Moreover, it is easy to see that
\begin{equation*}
\lim_{q\to\infty}\Big(\frac{p}{q-p}\Big)^{1/q}=1.\Longrightarrow \Big(\frac{p}{q-p}\Big)^{1/q}=\mathcal{O}(1)\quad \mbox{as}~ q\to\infty.
\end{equation*}
Summing up the above estimates, we then conclude that
\begin{equation*}
\|f\|_{L^q}\lesssim q\Big[t^{1/q-1/p}\cdot\|f\|_{L^{p,r}}+\|f\|_{\mathcal{W}}\cdot t^{1/q}\Big]
\end{equation*}
with the implicit positive constant independent of $q$ (may depend on $p$ and $r$). We now take $t>0$ in this estimate such that
\begin{equation*}
t^{1/q-1/p}\cdot\|f\|_{L^{p,r}}=\|f\|_{\mathcal{W}}\cdot t^{1/q}.
\end{equation*}
In other words,
\begin{equation*}
t:=\left(\frac{\|f\|_{L^{p,r}}}{\|f\|_{\mathcal{W}}}\right)^p.
\end{equation*}
Therefore, by such choice of $t>0$, we finally obtain
\begin{equation*}
\|f\|_{L^q}\lesssim q\cdot\big(\|f\|_{L^{p,r}}\big)^{p/q}\cdot\big(\|f\|_{\mathcal{W}}\big)^{1-p/q},
\end{equation*}
where the implicit positive constant is independent of $q>1$. We are done.
\end{proof}

\begin{rem}\label{citeremark}
(1) In particular, when $1<p<\infty$ and $r=p$ in Theorem \ref{strong},
\begin{equation}\label{remarks1}
L^{p,p}(\mathbb R^n)\cap \mathcal{W}=L^p(\mathbb R^n)\cap \mathcal{W}\hookrightarrow L^q(\mathbb R^n),
\end{equation}
for all $q$ with $p<q<\infty$. Furthermore, in view of Lemma \ref{lemmaw}, we have the following continuous embedding
\begin{equation}\label{remarks2}
L^p(\mathbb R^n)\cap \mathrm{BMO}(\mathbb R^n)\hookrightarrow L^q(\mathbb R^n),
\end{equation}
for all $q$ with $p<q<\infty$, where $1<p<\infty$, and
\begin{equation}\label{remarks3}
\|f\|_{L^q}\lesssim q\cdot\big(\|f\|_{L^{p}}\big)^{p/q}\cdot\big(\|f\|_{\mathrm{BMO}}\big)^{1-p/q}
\end{equation}
with the implicit positive constant independent of $q>1$. The above embedding \eqref{remarks2} and the interpolation inequality \eqref{remarks3} were first proved by Chen--Zhu in \cite[Theorem 2]{Chen} as far as we know, by using the technique of the non-increasing rearrangement and a slight variant of the John--Nirenberg inequality. Moreover, Chen--Zhu showed that in \eqref{remarks2} the number $p$ is allowed to be 1, that is, they showed that for any $1\leq p<q<\infty$,
\begin{equation}\label{chen}
\|f\|_{L^q}\leq C(n,p)q\cdot\big(\|f\|_{L^p}\big)^{p/q}\cdot\big(\|f\|_{\mathrm{BMO}}\big)^{1-p/q}
\end{equation}
holds true for all $f\in L^p(\mathbb R^n)\cap \mathrm{BMO}(\mathbb R^n)$, where $C(n,p)$ is a positive constant depending only on the dimension $n$ and $p$. Later, this estimate was improved and extended by many authors through different methods,
see, for example, \cite[Theorems 2.1 and 2.2]{Kozono2}, \cite[Theorem 1.1]{wadade} and \cite[Lemma 3]{mil}. It is worth mentioning that \eqref{chen} was recently reproved by the author in \cite{wang}, by using the classical Calderon--Zygmund decomposition. The above embedding \eqref{remarks1} was first established by Kozono--Wadade in \cite[Lemmas 3.2 and 3.3]{Kozono2}, where the number $p$ is also allowed to be 1.

(2) It should be pointed out that the growth order $q$ as $q\to\infty$ in \eqref{chen} is optimal
(see \cite[Theorem 2.2 and Remark (iii)]{Kozono2}).

(3) Inspired by the works in \cite{Chen} and \cite{Kozono2}, it is very natural to ask whether there is a corresponding estimate for the case $p=1$ in Theorem \ref{strong} or Corollary \ref{cor33}. Our method cannot be used to establish such a result because the key estimate \eqref{key1} does not hold if $p=1$.
\end{rem}

\subsection{Related bilinear estimates and John--Nirenberg type inequalities}
In this section, let us give some applications. Let $1<p<\infty$ and $p\leq r\leq\infty$. A direct consequence of Corollary \ref{cor33} is the following bilinear estimate:
\begin{equation}\label{bilinear1}
\|\mathcal{F}\cdot \mathcal{G}\|_{L^{p}}\lesssim p^2
\Big[\|\mathcal{F}\|_{L^{p,r}}\|\mathcal{G}\|_{\mathrm{BMO}}+\|\mathcal{G}\|_{L^{p,r}}\|\mathcal{F}\|_{\mathrm{BMO}}\Big]
\end{equation}
holds for two functions $\mathcal{F},\mathcal{G}\in L^{p,r}(\mathbb R^n)\cap \mathrm{BMO}(\mathbb R^n)$, where the implicit positive constant is independent of both $p$ and the functions $\mathcal{F},\mathcal{G}$. As a matter of fact, for any $p>1$, H\"{o}lder's inequality and Corollary \ref{cor33} with $q=2p$ yield
\begin{equation*}
\begin{split}
\|\mathcal{F}\cdot \mathcal{G}\|_{L^{p}}&\leq\|\mathcal{F}\|_{L^{2p}}\cdot\|\mathcal{G}\|_{L^{2p}}\\
&\lesssim p^2\Big[\|\mathcal{F}\|^{1/2}_{L^{p,r}}\|\mathcal{F}\|^{1/2}_{\mathrm{BMO}}
\cdot\|\mathcal{G}\|^{1/2}_{L^{p,r}}\|\mathcal{G}\|^{1/2}_{\mathrm{BMO}}\Big]\\
&=p^2\Big[\|\mathcal{F}\|_{L^{p,r}}\|\mathcal{G}\|_{\mathrm{BMO}}
\cdot\|\mathcal{G}\|_{L^{p,r}}\|\mathcal{F}\|_{\mathrm{BMO}}\Big]^{1/2}.
\end{split}
\end{equation*}
Moreover, from the elementary inequality $2\sqrt{AB}\leq A+B$, $A,B>0$, it follows that
\begin{equation*}
\begin{split}
\|\mathcal{F}\cdot \mathcal{G}\|_{L^{p}}
&\lesssim p^2\Big[\|\mathcal{F}\|_{L^{p,r}}\|\mathcal{G}\|_{\mathrm{BMO}}+\|\mathcal{G}\|_{L^{p,r}}\|\mathcal{F}\|_{\mathrm{BMO}}\Big],
\end{split}
\end{equation*}
as desired. In particular, when $p=r$, then we have
\begin{equation*}
\|\mathcal{F}\cdot \mathcal{G}\|_{L^{p}}\leq C(p,n)
\Big[\|\mathcal{F}\|_{L^{p}}\|\mathcal{G}\|_{\mathrm{BMO}}+\|\mathcal{G}\|_{L^{p}}\|\mathcal{F}\|_{\mathrm{BMO}}\Big]
\end{equation*}
for two functions $\mathcal{F},\mathcal{G}\in L^{p}(\mathbb R^n)\cap \mathrm{BMO}(\mathbb R^n)$ with $1<p<\infty$. These bilinear estimates for $1<p<\infty$ were proved by Kozono and Taniuchi using the boundedness of bilinear pseudo-differential operators in BMO
(due to Coifman and Meyer), see \cite[Lemma 1]{Kozono}(see also \cite{nakai3} for the dyadic BMO case, as an extension of Kozono and Taniuchi's result on the usual BMO). Such kind of inequalities are used to extend some results on uniqueness and regularity of weak solutions to the Navier--Stokes equations. Similarly, for three functions $\mathcal{F},\mathcal{G},\mathcal{H}\in L^{p,r}(\mathbb R^n)\cap\mathrm{BMO}(\mathbb R^n)$ with $1<p<\infty$ and $p\leq r\leq\infty$, by H\"{o}lder's inequality and Corollary \ref{cor33} with $q=3p$, we can deduce that
\begin{equation*}
\begin{split}
&\|\mathcal{F}\cdot \mathcal{G}\cdot \mathcal{H}\|_{L^p}\\
&\leq\|\mathcal{F}\|_{L^{3p}}\cdot\|\mathcal{G}\|_{L^{3p}}\cdot\|\mathcal{H}\|_{L^{3p}}\\
&\lesssim p^3
\Big(\|\mathcal{F}\|_{L^{p,r}}^{1/3}\|\mathcal{F}\|_{\mathrm{BMO}}^{2/3}\cdot\|\mathcal{G}\|_{L^{p,r}}^{1/3}\|\mathcal{G}\|_{\mathrm{BMO}}^{2/3}
\cdot\|\mathcal{H}\|_{L^{p,r}}^{1/3}\|\mathcal{H}\|_{\mathrm{BMO}}^{2/3}\Big)\\
&=p^3\Big(\|\mathcal{F}\|_{L^{p,r}}\|\mathcal{G}\|_{\mathrm{BMO}}\|\mathcal{H}\|_{\mathrm{BMO}}
\cdot\|\mathcal{G}\|_{L^{p,r}}\|\mathcal{F}\|_{\mathrm{BMO}}\|\mathcal{H}\|_{\mathrm{BMO}}\\
&\cdot\|\mathcal{H}\|_{L^{p,r}}\|\mathcal{F}\|_{\mathrm{BMO}}\|\mathcal{G}\|_{\mathrm{BMO}}\Big)^{1/3}.
\end{split}
\end{equation*}
Hence, we employ the elementary inequality $3(ABC)^{1/3}\leq A+B+C$ to obtain
\begin{equation*}
\begin{split}
\|\mathcal{F}\cdot \mathcal{G}\cdot \mathcal{H}\|_{L^p}
&\lesssim p^3\Big(\|\mathcal{F}\|_{L^{p,r}}\|\mathcal{G}\|_{\mathrm{BMO}}\|\mathcal{H}\|_{\mathrm{BMO}}
+\|\mathcal{G}\|_{L^{p,r}}\|\mathcal{F}\|_{\mathrm{BMO}}\|\mathcal{H}\|_{\mathrm{BMO}}\\
&+\|\mathcal{H}\|_{L^{p,r}}\|\mathcal{F}\|_{\mathrm{BMO}}\|\mathcal{G}\|_{\mathrm{BMO}}\Big).
\end{split}
\end{equation*}
In general, for the case $3\leq K\in \mathbb{N}$, the following estimate
\begin{equation*}
\begin{split}
&\bigg\|\prod_{j=1}^K \mathcal{F}_j\bigg\|_{L^p}\\
&\lesssim p^{K}
\sum_{j=1}^{K}\|\mathcal{F}_1\|_{\mathrm{BMO}}\cdots\|\mathcal{F}_{j-1}\|_{\mathrm{BMO}}
\|\mathcal{F}_j\|_{L^{p,r}}\|\mathcal{F}_{j+1}\|_{\mathrm{BMO}}\cdots\|\mathcal{F}_K\|_{\mathrm{BMO}}
\end{split}
\end{equation*}
holds for all $\mathcal{F}_1,\dots,\mathcal{F}_{K}\in L^{p,r}(\mathbb R^n)\cap\mathrm{BMO}(\mathbb R^n)$ with $1<p<\infty$ and $p\leq r\leq\infty$.

For $p\geq1$, we set
\begin{equation}\label{phif}
\Phi_p(t):=\exp(t)-\sum_{j=0}^{p-1}\frac{t^j}{j!}=\sum_{j=p}^{\infty}\frac{t^j}{j!},\quad t>0.
\end{equation}
As another application of Corollary \ref{cor33}, we have the following result, which may be regarded as a generalization of the John--Nirenberg inequality to the Lorentz spaces. There exists a positive constant $C({n,p,r,\alpha})$ depending on $n,p,r$ and $\alpha$ such that
\begin{equation}\label{bilinear14}
\int_{\mathbb R^n}\bigg[\Phi_p\Big(\alpha\frac{|f(x)|}{\|f\|_{\mathrm{BMO}}}\Big)\bigg]\,dx
\leq C({n,p,r,\alpha})\bigg(\frac{\|f\|_{L^{p,r}}}{\|f\|_{\mathrm{BMO}}}\bigg)^p
\end{equation}
holds for all $f\in L^{p,r}(\mathbb R^n)\cap\mathrm{BMO}(\mathbb R^n)$ with $1<p<\infty$ and $p\leq r\leq\infty$, provided that $0<\alpha<\alpha_n$, where $\alpha_n$ is a positive constant which depends on $n,r$ and $p$.

Since for any $p\leq j<\infty$, we have that
\begin{equation}\label{wanguseinequ1}
\int_{\mathbb R^n}|f(x)|^j\,dx=\big\|f\big\|^j_{L^{j}}\leq {C}(n,p,r)^jj^j\cdot\big(\|f\|_{L^{p,r}}\big)^p\cdot\big(\|f\|_{\mathrm{BMO}}\big)^{j-p}
\end{equation}
holds for all $f\in L^{p,r}(\mathbb R^n)\cap\mathrm{BMO}(\mathbb R^n)$, where ${C}(n,p,r)$ is a positive constant depending on $n,r$ and $p$, but not on $j$. Therefore, by interchanging the order of integration and summation and using the estimate \eqref{wanguseinequ1}, we get
\begin{equation*}
\begin{split}
\int_{\mathbb R^n}\bigg[\Phi_p\Big(\alpha\frac{|f(x)|}{\|f\|_{\mathrm{BMO}}}\Big)\bigg]\,dx
&=\int_{\mathbb R^n}\sum_{j=p}^{\infty}\frac{\alpha^j}{j!}\bigg(\frac{|f(x)|}{\|f\|_{\mathrm{BMO}}}\bigg)^jdx\\
&=\sum_{j=p}^{\infty}\frac{\alpha^j}{j!}\cdot\frac{1}{\|f\|^j_{\mathrm{BMO}}}\int_{\mathbb R^n}|f(x)|^j\,dx\\
&\leq \sum_{j=p}^{\infty}\frac{\alpha^j}{j!}\cdot\frac{1}{\|f\|^j_{\mathrm{BMO}}}\big[{C}(n,p,r)j\big]^{j}
\big(\|f\|_{L^{p,r}}\big)^p\cdot\big(\|f\|_{\mathrm{BMO}}\big)^{j-p}\\
&=\sum_{j=p}^{\infty}\frac{[{C}(n,p,r)\cdot\alpha]^j\cdot j^{j}}{j!}
\bigg(\frac{\|f\|_{L^{p,r}}}{\|f\|_{\mathrm{BMO}}}\bigg)^p.
\end{split}
\end{equation*}
We set $\gamma_j:=j^j/{(j!)}$. Since
\begin{equation*}
\lim_{j\to\infty}\frac{\gamma_{j+1}}{\gamma_j}=\lim_{j\to\infty}\left(1+\frac{\,1\,}{j}\right)^{j}=e,
\end{equation*}
we know that the above series converges provided ${C}(n,p,r)\cdot\alpha<e^{-1}$. That is, for
\begin{equation*}
0<\alpha<\alpha_n:=\big[{C}(n,p,r)\cdot e\big]^{-1},
\end{equation*}
we see that \eqref{bilinear14} holds with $C({n,p,r,\alpha})$ is the sum of the above convergent series.
In particular, when $r=p$ (in this case, the range of $p$ is $[1,\infty)$, as noted in Remark \ref{citeremark}, and $\Phi_p(t)=\exp(t)-1$ if $p=1$), the following inequality
\begin{equation}\label{japenm}
\int_{\mathbb R^n}\bigg[\exp\Big(\alpha\frac{|f(x)|}{\|f\|_{\mathrm{BMO}}}\Big)-1\bigg]dx
\leq C({n,\alpha})\bigg(\frac{\|f\|_{L^1}}{\|f\|_{\mathrm{BMO}}}\bigg)
\end{equation}
holds for all $f\in L^{1}(\mathbb R^n)\cap\mathrm{BMO}(\mathbb R^n)$, see \cite[Corollary 2.1]{Kozono2}. A direct computation yields
\begin{equation*}
\begin{split}
&\int_{\mathbb R^n}\bigg[\exp\Big(\alpha\frac{|f(x)|}{\|f\|_{\mathrm{BMO}}}\Big)-1\bigg]dx\\
&=\int_{\mathbb R^n}\sum_{j=1}^{\infty}\frac{\alpha^j}{j!}\bigg(\frac{|f(x)|}{\|f\|_{\mathrm{BMO}}}\bigg)^jdx\\
&=\sum_{j=1}^{\infty}\frac{\alpha^j}{j!}\cdot\frac{1}{\|f\|^j_{\mathrm{BMO}}}\int_{\mathbb R^n}|f(x)|^j\,dx.
\end{split}
\end{equation*}
Moreover, for each fixed $j\in \mathbb{N}$, it is easy to see that for any $\lambda>0$,
\begin{equation*}
\begin{split}
\int_{\mathbb R^n}|f(x)|^j\,dx&\geq\int_{\{x\in \mathbb R^n:|f(x)|>\lambda\}}|f(x)|^j\,dx\\
&>\lambda^j\cdot m\big(\big\{x\in \mathbb R^n:|f(x)|>\lambda\big\}\big).
\end{split}
\end{equation*}
Hence
\begin{equation}\label{jninequality1}
\begin{split}
&\int_{\mathbb R^n}\bigg[\exp\Big(\alpha\frac{|f(x)|}{\|f\|_{\mathrm{BMO}}}\Big)-1\bigg]\,dx\\
&\geq\sum_{j=1}^{\infty}\frac{\alpha^j}{j!}\cdot\bigg(\frac{\lambda}{\|f\|_{\mathrm{BMO}}}\bigg)^j
\cdot m\big(\big\{x\in \mathbb R^n:|f(x)|>\lambda\big\}\big)\\
&=\Big[\exp\Big(\frac{\alpha\lambda}{\|f\|_{\mathrm{BMO}}}\Big)-1\Big]
\cdot m\big(\big\{x\in \mathbb R^n:|f(x)|>\lambda\big\}\big).
\end{split}
\end{equation}
Therefore, it follows from \eqref{japenm} and \eqref{jninequality1} that for any $\lambda>0$,
\begin{equation*}
\begin{split}
&\Big[\exp\Big(\frac{\alpha\lambda}{\|f\|_{\mathrm{BMO}}}\Big)-1\Big]
\cdot m\big(\big\{x\in \mathbb R^n:|f(x)|>\lambda\big\}\big)
\leq C({n,\alpha})\bigg(\frac{\|f\|_{L^1}}{\|f\|_{\mathrm{BMO}}}\bigg),
\end{split}
\end{equation*}
which in turn implies that
\begin{equation*}
\begin{split}
m\big(\big\{x\in \mathbb R^n:|f(x)|>\lambda\big\}\big)
\leq C({n,\alpha})\bigg(\frac{\|f\|_{L^1}}{\|f\|_{\mathrm{BMO}}}\bigg)
\cdot\Big[\exp\Big(\frac{\alpha\lambda}{\|f\|_{\mathrm{BMO}}}\Big)-1\Big]^{-1}.
\end{split}
\end{equation*}
If $\lambda>\|f\|_{\mathrm{BMO}}$, then
\begin{equation*}
\Big[\exp\Big(\frac{\alpha\lambda}{\|f\|_{\mathrm{BMO}}}\Big)-1\Big]^{-1}
\leq\exp\Big(-\frac{\alpha\lambda}{\|f\|_{\mathrm{BMO}}}\Big)
\cdot\frac{1}{1-e^{-\alpha}},
\end{equation*}
and hence
\begin{equation*}
\begin{split}
d_f(\lambda)&=m\big(\big\{x\in \mathbb R^n:|f(x)|>\lambda\big\}\big)\\
&\leq C({n,\alpha})\bigg(\frac{\|f\|_{L^1}}{\|f\|_{\mathrm{BMO}}}\bigg)\exp\Big(-\frac{\alpha\lambda}{\|f\|_{\mathrm{BMO}}}\Big)
\end{split}
\end{equation*}
whenever $\lambda>\|f\|_{\mathrm{BMO}}$. This interesting estimate is a variant of the John--Nirenberg inequality, which was given by Chen--Zhu in \cite[(4)]{Chen}.

\section{Interpolation inequalities between Lorentz--Morrey, $L^{\infty}$ and BMO spaces}
\subsection{Main results}
The main results of this section are stated as follows.

\begin{thm}\label{thmwang2}
Let $n\in \mathbb{N}$, $1\leq p<\infty$, $p<r\leq\infty$ and $0<\kappa<1$. Then the following statements hold.

$(1)$ If $r<\infty$ and $f\in LM^{p,r;\kappa}(\mathbb R^n)\cap{L^\infty}(\mathbb R^n)$, then for every $q$ with $r<q<\infty$, we have $f\in M^{q;\kappa}(\mathbb R^n)$, and the interpolation inequality
\begin{equation*}
\|f\|_{M^{q;\kappa}}\leq 2\cdot\big(\|f\|_{LM^{p,r;\kappa}}\big)^{p/q}\cdot\big(\|f\|_{L^\infty}\big)^{1-p/q}
\end{equation*}
holds for all $f\in LM^{p,r;\kappa}(\mathbb R^n)\cap{L^\infty}(\mathbb R^n)$.

$(2)$ If $r=\infty$ and $f\in LM^{p,r;\kappa}(\mathbb R^n)\cap{L^\infty}(\mathbb R^n)$, then for every $q$ with $p<q<\infty$, we have $f\in M^{q;\kappa}(\mathbb R^n)$, and the interpolation inequality
\begin{equation*}
\|f\|_{M^{q;\kappa}}\leq \Big[2^{1-1/q}\Big(\frac{q}{q-p}\Big)^{1/q}\Big]
\cdot\big(\|f\|_{LM^{p,\infty;\kappa}}\big)^{p/q}\cdot\big(\|f\|_{L^\infty}\big)^{1-p/q}
\end{equation*}
holds for all $f\in LM^{p,\infty;\kappa}(\mathbb R^n)\cap{L^\infty}(\mathbb R^n)$.
\end{thm}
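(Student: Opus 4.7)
The plan is to reduce Theorem~\ref{thmwang2} to Theorem~\ref{thmwang1} ball-by-ball, exploiting the identity $f^{*}_{\mathcal{B}}(s) = (f \cdot \mathbf{1}_{\mathcal{B}})^{*}(s)$ together with the scaling $(m(\mathcal{B})^{\kappa/p})^{p/q} = m(\mathcal{B})^{\kappa/q}$, which is precisely what converts the Lorentz--Morrey normalization into the Morrey one. Fix any ball $\mathcal{B}\subset\mathbb R^n$ and consider the truncation $f\cdot \mathbf{1}_{\mathcal{B}}$. By definition, $\|f\cdot \mathbf{1}_{\mathcal{B}}\|_{L^{p,r}} = \|f\|_{L^{p,r}(\mathcal{B})} \leq m(\mathcal{B})^{\kappa/p}\|f\|_{LM^{p,r;\kappa}}$, and since $f^{*}_{\mathcal{B}}(s)\leq f^{*}(s)\leq \|f\|_{L^\infty}$ we also have $\|f\cdot \mathbf{1}_{\mathcal{B}}\|_{L^\infty}\leq \|f\|_{L^\infty}$. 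Thus $f\cdot \mathbf{1}_{\mathcal{B}} \in L^{p,r}(\mathbb R^n) \cap L^\infty(\mathbb R^n)$ and Theorem~\ref{thmwang1} is directly applicable.

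For case (1) with $r<\infty$ and $r<q<\infty$, applying Theorem~\ref{thmwang1}(1) to $f\cdot \mathbf{1}_{\mathcal{B}}$ yields
\begin{equation*}
\|f\|_{L^q(\mathcal{B})} = \|f\cdot\mathbf{1}_{\mathcal{B}}\|_{L^q} \leq 2\cdot \big(\|f\|_{L^{p,r}(\mathcal{B})}\big)^{p/q}\cdot \big(\|f\|_{L^\infty}\big)^{1-p/q}.
\end{equation*}
Inserting the Lorentz--Morrey bound on $\|f\|_{L^{p,r}(\mathcal{B})}$ and using the scaling identity above then gives
\begin{equation*}
\|f\|_{L^q(\mathcal{B})} \leq 2\cdot m(\mathcal{B})^{\kappa/q}\cdot \big(\|f\|_{LM^{p,r;\kappa}}\big)^{p/q}\cdot \big(\|f\|_{L^\infty}\big)^{1-p/q}.
\end{equation*}
Dividing by $m(\mathcal{B})^{\kappa/q}$ and taking the supremum over all balls $\mathcal{B}\subset \mathbb R^n$ produces the desired estimate. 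Case (2) with $r=\infty$ and $p<q<\infty$ is handled identically using Theorem~\ref{thmwang1}(2) in place of (1), so the constant $2^{1-1/q}(q/(q-p))^{1/q}$ transfers verbatim, and in particular the asymptotic constant $2$ as $q\to\infty$ is preserved.

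If one prefers to avoid invoking Theorem~\ref{thmwang1} as a black box, one can literally repeat its proof at the level of each ball: using \eqref{equality1}, write $\|f\|_{L^q(\mathcal{B})} = \bigl(\int_0^\infty [f^{*}_{\mathcal{B}}(s)]^q\,ds\bigr)^{1/q}$, split the integral at a free parameter $t>0$ into $I_1(t)+I_2(t)$, bound $I_1(t)\leq \|f\|_{L^\infty}\cdot t^{1/q}$ using $f^{*}_{\mathcal{B}}\leq \|f\|_{L^\infty}$, control $I_2(t)$ by factoring out $\|f\|_{L^{p,r}(\mathcal{B})}$ exactly as in the proof of Theorem~\ref{thmwang1}, and optimize by taking $t = (\|f\|_{L^{p,r}(\mathcal{B})}/\|f\|_{L^\infty})^p$. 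There is no real obstacle here: the result is a clean localization of Theorem~\ref{thmwang1}. The only point worth underscoring is the algebraic coincidence $(m(\mathcal{B})^{\kappa/p})^{p/q} = m(\mathcal{B})^{\kappa/q}$, which is exactly what makes both the interpolation exponent $p/q$ and the numerical constants match up on the two sides of the inequality.
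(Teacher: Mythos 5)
Your proposal is correct and is essentially the paper's argument: the paper proves Theorem \ref{thmwang2} by repeating the proof of Theorem \ref{thmwang1} on each ball $\mathcal{B}$ (splitting $\int_0^\infty [f^{*}_{\mathcal{B}}(s)]^q\,ds$ at a ball-dependent $t=(\|f\|_{LM^{p,r;\kappa}}/\|f\|_{L^\infty})^p m(\mathcal{B})^{\kappa}$), which is exactly the localization you describe as your fallback. Your primary packaging --- applying Theorem \ref{thmwang1} as a black box to $f\cdot\mathbf{1}_{\mathcal{B}}$ and using that both exponents $p/q$ and $1-p/q$ are positive so the bounds $\|f\cdot\mathbf{1}_{\mathcal{B}}\|_{L^{p,r}}\leq m(\mathcal{B})^{\kappa/p}\|f\|_{LM^{p,r;\kappa}}$ and $\|f\cdot\mathbf{1}_{\mathcal{B}}\|_{L^{\infty}}\leq\|f\|_{L^{\infty}}$ can be inserted --- is a valid and cleaner presentation of the same computation.
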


\begin{thm}\label{weak}
Let $n\in \mathbb{N}$, $1<p<\infty$, $p\leq r\leq\infty$ and $0<\kappa<1$. If $p\leq r<\infty$ and $f\in LM^{p,r;\kappa}(\mathbb R^n)\cap \mathrm{BMO}(\mathbb R^n)$, then for every $q$ with $r<q<\infty$(or if $f\in LM^{p,\infty;\kappa}(\mathbb R^n)\cap \mathrm{BMO}(\mathbb R^n)$, then for every $q$ with $p<q<\infty$), we have $f\in M^{q;\kappa}(\mathbb R^n)$, and the following interpolation inequality
\begin{equation*}
\|f\|_{M^{q;\kappa}}\lesssim q\cdot\big(\|f\|_{LM^{p,r;\kappa}}\big)^{p/q}\cdot\big(\|f\|_{\mathrm{BMO}}\big)^{1-p/q}
\end{equation*}
holds for all $f\in LM^{p,r;\kappa}(\mathbb R^n)\cap{\mathrm{BMO}}(\mathbb R^n)$. Here the implicit positive constant is independent of $q$.
\end{thm}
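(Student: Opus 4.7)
The plan is to reduce Theorem \ref{weak} to a ball-by-ball local interpolation inequality and then take a supremum. Fix an arbitrary ball $\mathcal{B}\subset\mathbb R^n$ and set $g:=f\cdot\mathbf{1}_{\mathcal{B}}$, so that $g^{*}=f^{*}_{\mathcal{B}}$, $g^{**}=f^{**}_{\mathcal{B}}$, and $\|g\|_{L^{p,r}}=\|f\|_{L^{p,r}(\mathcal{B})}$. The goal of the first half of the proof is to show, with implicit constant independent of $\mathcal{B}$ and $q$, that
\[
\|f\|_{L^q(\mathcal{B})}\lesssim q\cdot\bigl(\|f\|_{L^{p,r}(\mathcal{B})}\bigr)^{p/q}\bigl(\|f\|_{\mathrm{BMO}}\bigr)^{1-p/q}.
\]
Once this is available, dividing both sides by $m(\mathcal{B})^{\kappa/q}=(m(\mathcal{B})^{\kappa/p})^{p/q}$, using $\|f\|_{L^{p,r}(\mathcal{B})}/m(\mathcal{B})^{\kappa/p}\leq\|f\|_{LM^{p,r;\kappa}}$, and taking the supremum over all balls $\mathcal{B}\subset\mathbb R^n$ immediately yields the Morrey inequality in Theorem \ref{weak}.

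To prove the local inequality I would imitate the proof of Theorem \ref{strong} applied to $g$. Two ingredients drive the whole argument. First, the decay bound $f^{**}_{\mathcal{B}}(t)\leq (p'/r')^{1/r'}\,t^{-1/p}\|f\|_{L^{p,r}(\mathcal{B})}$, which follows directly from \eqref{key1} applied to $g$. Second, the logarithmic modulus estimate $f^{**}_{\mathcal{B}}(s)\leq f^{**}_{\mathcal{B}}(t)+C(n)\|f\|_{\mathrm{BMO}}\log(t/s)$ for $0<s<t$, which is obtained by integrating \eqref{important} from $s$ to $t$ once one has established the local W-type bound $\sup_{\tau>0}[f^{**}_{\mathcal{B}}(\tau)-f^{*}_{\mathcal{B}}(\tau)]\leq C(n)\|f\|_{\mathrm{BMO}}$. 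With these two ingredients in place, the structure of the proof of Theorem \ref{strong} carries over verbatim: split $\|f\|_{L^q(\mathcal{B})}\leq J_1(t)+J_2(t)$ at a cutoff $t>0$; control $J_1(t)$ via $g^{*}\leq g^{**}$, Minkowski's inequality, the gamma-function identity \eqref{weneed} and the Stirling asymptotic \eqref{aso} $\Gamma(q+1)^{1/q}=\mathcal{O}(q)$; control $J_2(t)$ using the Lorentz decay of $g^{*}$ (treating $r<\infty$ and $r=\infty$ separately as in Theorem \ref{strong}); and finally optimize by taking $t:=(\|f\|_{L^{p,r}(\mathcal{B})}/\|f\|_{\mathrm{BMO}})^p$.

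The one step that is genuinely new, and that I expect to be the main obstacle, is the local W-type bound $\sup_{\tau>0}[f^{**}_{\mathcal{B}}(\tau)-f^{*}_{\mathcal{B}}(\tau)]\leq C(n)\|f\|_{\mathrm{BMO}}$. One cannot simply apply Lemma \ref{lemmaw} to $g=f\cdot\mathbf{1}_{\mathcal{B}}$, because multiplication by the characteristic function of a ball does not in general preserve the BMO norm, so $\|g\|_{\mathrm{BMO}}$ need not be controlled by $\|f\|_{\mathrm{BMO}}$. I would circumvent this by exploiting the Bennett--DeVore--Sharpley identity $g^{**}(\tau)-g^{*}(\tau)=\tau^{-1}\int_0^{\tau}[g^{*}(\eta)-g^{*}(\tau)]\,d\eta$ together with the John--Nirenberg inequality applied inside $\mathcal{B}$, which yields exponential decay of the distribution of $f-f_{\mathcal{B}}$ on $\mathcal{B}$ and thereby controls the oscillation of $g^{*}$ uniformly by $\|f\|_{\mathrm{BMO}}$. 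Once this local W-bound is in hand, the rest of the argument is a direct ball-by-ball transcription of the proof of Theorem \ref{strong}, and the final supremum gives the desired Morrey estimate.
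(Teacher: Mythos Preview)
Your plan is structurally identical to the paper's proof: localize to a ball $\mathcal{B}$, transcribe the argument of Theorem~\ref{strong} to $g=f\cdot\mathbf{1}_{\mathcal{B}}$ (decay bound \eqref{key1} for $g^{**}$, logarithmic modulus estimate, split $\|g\|_{L^q}$ at a cutoff $t$, Stirling, optimize), and then pass to the Morrey norm by taking the supremum over balls. The only cosmetic difference is bookkeeping: the paper carries the factor $m(\mathcal{B})^{-\kappa/q}$ from the start and optimizes with $t=(\|f\|_{LM^{p,r;\kappa}}/\|f\|_{\mathrm{BMO}})^{p}m(\mathcal{B})^{\kappa}$, whereas you first prove a clean local inequality with $t=(\|f\|_{L^{p,r}(\mathcal{B})}/\|f\|_{\mathrm{BMO}})^{p}$ and divide by $m(\mathcal{B})^{\kappa/q}$ afterwards. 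These are equivalent.

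The one substantive divergence is precisely the step you flag. Contrary to your expectation, the paper \emph{does} go through $\mathrm{BMO}$: it asserts (inequality \eqref{diff}) that $\|f\cdot\mathbf{1}_{\mathcal{B}}\|_{\mathrm{BMO}}\le 2\|f\|_{\mathrm{BMO}}$ for every ball $\mathcal{B}$, arguing that after replacing the center $(f\cdot\mathbf{1}_{\mathcal{B}})_{\mathcal{R}}$ by $f_{\mathcal{R}}$ one has $\int_{\mathcal{R}}|f\cdot\mathbf{1}_{\mathcal{B}}-f_{\mathcal{R}}|=\int_{\mathcal{R}\cap\mathcal{B}}|f-f_{\mathcal{R}}|$, and then applies Lemma~\ref{lemmaw} to $f\cdot\mathbf{1}_{\mathcal{B}}$. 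Your instinct that this is problematic is correct: that equality drops the contribution $|f_{\mathcal{R}}|\cdot m(\mathcal{R}\setminus\mathcal{B})$, and \eqref{diff} is false in general (take $f\equiv 1$, or $f=\log|x|$ with $\mathcal{B}=B(0,R)$ and $R$ large). So while the two write-ups follow the same outline, the paper's treatment of this step is the more fragile one, and your proposed John--Nirenberg detour is the sounder idea. Do note, however, that the bald statement $\sup_{\tau>0}[f^{**}_{\mathcal{B}}(\tau)-f^{*}_{\mathcal{B}}(\tau)]\le C(n)\|f\|_{\mathrm{BMO}}$ fails for the same examples (the jump of $g^{*}$ at $\tau=m(\mathcal{B})$ makes $g^{**}(m(\mathcal{B}))-g^{*}(m(\mathcal{B}))$ comparable to $|f_{\mathcal{B}}|$); you will need to restrict the $\mathcal{W}$-type bound to $0<\tau<m(\mathcal{B})$, where John--Nirenberg genuinely controls the oscillation of $g^{*}$, and treat $\tau\ge m(\mathcal{B})$ separately using $g^{*}\equiv 0$ there.
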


\subsection{Proofs of Theorems $\ref{thmwang2}$ and $\ref{weak}$}
\begin{proof}[Proof of Theorem $\ref{thmwang2}$]
Let $f\in LM^{p,r;\kappa}(\mathbb R^n)\cap{L^\infty}(\mathbb R^n)$ with $1\leq p<\infty$, $p<r\leq\infty$ and $0<\kappa<1$. For any ball $\mathcal{B}$ in $\mathbb R^n$ and for $p<q<\infty$, by equation \eqref{equality1}, we see that
\begin{equation*}
\begin{split}
&\frac{1}{m(\mathcal{B})^{\kappa/q}}\|f\|_{L^{q}(\mathcal{B})}
=\frac{1}{m(\mathcal{B})^{\kappa/q}}\bigg(\int_0^{\infty}\big[f^{*}_{\mathcal{B}}(s)\big]^qds\bigg)^{1/q}\\
&\leq\frac{1}{m(\mathcal{B})^{\kappa/q}}\bigg(\int_0^{t}\big[f^{*}_{\mathcal{B}}(s)\big]^qds\bigg)^{1/q}
+\frac{1}{m(\mathcal{B})^{\kappa/q}}\bigg(\int_{t}^{\infty}\big[f^{*}_{\mathcal{B}}(s)\big]^qds\bigg)^{1/q}\\
&:=I'_1(t)+I'_2(t),
\end{split}
\end{equation*}
where $t>0$ is a constant to be determined later. Recall that for any fixed ball $\mathcal{B}\subset\mathbb R^n$,
\begin{equation}\label{often2}
f^{*}_{\mathcal{B}}(s)\leq f^{*}(s)\leq \|f\|_{L^{\infty}}=\sup_{t>0}f^{*}(t)
\end{equation}
holds for all $s>0$. Therefore, we have
\begin{equation*}
\begin{split}
I'_1(t)&\leq\frac{1}{m(\mathcal{B})^{\kappa/q}}\|f\|_{L^\infty}\cdot t^{1/q}.
\end{split}
\end{equation*}
$(1)$ When $r<\infty$ and $p<r<q<\infty$, in this case, it follows from \eqref{often2} that for any $t>0$,
\begin{equation*}
\begin{split}
I'_2(t)&=\frac{1}{m(\mathcal{B})^{\kappa/q}}
\bigg(\int_{t}^{\infty}s^{r/p-1}\big[f^{*}_{\mathcal{B}}(s)\big]^{r}
\cdot\big[f^{*}_{\mathcal{B}}(s)\big]^{q-r}\frac{1}{s^{r/p-1}}ds\bigg)^{1/q}\\
&\leq\frac{1}{m(\mathcal{B})^{\kappa/q}}
\bigg(\int_{t}^{\infty}\Big[s^{1/p}f^{*}_{\mathcal{B}}(s)\Big]^r\frac{ds}{s}\bigg)^{1/q}
\cdot\big(\|f\|_{L^\infty}\big)^{1-r/q}\Big(\frac{1}{t^{r/p-1}}\Big)^{1/q},
\end{split}
\end{equation*}
where in the last inequality we have used the facts that $q-r>0$ and $r/p-1>0$. Moreover, by the definition of $LM^{p,r;\kappa}(\mathbb R^n)$, we obtain
\begin{equation*}
\begin{split}
I'_2(t)&\leq\frac{1}{m(\mathcal{B})^{\kappa/q}}
\bigg(\int_0^{\infty}\Big[s^{1/p}f^{*}_{\mathcal{B}}(s)\Big]^r\frac{ds}{s}\bigg)^{1/q}
\cdot\big(\|f\|_{L^\infty}\big)^{1-r/q}\Big(\frac{1}{t^{r/p-1}}\Big)^{1/q}\\
&\leq\frac{1}{m(\mathcal{B})^{\kappa/q}}
\big(\|f\|_{LM^{p,r;\kappa}}m(\mathcal{B})^{\kappa/p}\big)^{r/q}
\big(\|f\|_{L^\infty}\big)^{1-r/q}\cdot\Big(\frac{1}{t^{r/p-1}}\Big)^{1/q}.
\end{split}
\end{equation*}
We now choose $t>0$ so that
\begin{equation*}
\big(\|f\|_{LM^{p,r;\kappa}}m(\mathcal{B})^{\kappa/p}\big)^{r/q}
\big(\|f\|_{L^\infty}\big)^{1-r/q}\cdot\Big(\frac{1}{t^{r/p-1}}\Big)^{1/q}=\|f\|_{L^\infty}\cdot t^{1/q},
\end{equation*}
that is,
\begin{equation*}
t:=\left(\frac{\|f\|_{LM^{p,r;\kappa}}}{\|f\|_{L^\infty}}\right)^pm(\mathcal{B})^{\kappa}.
\end{equation*}
Therefore,
\begin{equation*}
\begin{split}
\frac{1}{m(\mathcal{B})^{\kappa/q}}\|f\|_{L^{q}(\mathcal{B})}
&\leq 2\cdot\frac{1}{m(\mathcal{B})^{\kappa/q}}\|f\|_{L^\infty}\cdot t^{1/q}\\
&=2\cdot\big(\|f\|_{LM^{p,r;\kappa}}\big)^{p/q}\cdot\big(\|f\|_{L^\infty}\big)^{1-p/q}.
\end{split}
\end{equation*}
$(2)$ When $r=\infty$ and $p<q<\infty$, in this case, by the definition of $LM^{p,\infty;\kappa}(\mathbb R^n)$ and the fact that $q/p>1$, we can deduce that for any $t>0$,
\begin{equation*}
\begin{split}
I'_2(t)&=\frac{1}{m(\mathcal{B})^{\kappa/q}}\bigg(\int_{t}^{\infty}\big[s^{1/p}f^{*}_{\mathcal{B}}(s)\big]^{q}\frac{1}{s^{q/p}}ds\bigg)^{1/q}\\
&\leq\frac{1}{m(\mathcal{B})^{\kappa/q}}\Big[\sup_{s>0}s^{1/p}f^{*}_{\mathcal{B}}(s)\Big]
\cdot\bigg(\int_{t}^{\infty}\frac{1}{s^{q/p}}ds\bigg)^{1/q}\\
&\leq \frac{1}{m(\mathcal{B})^{\kappa/q}}
\big(\|f\|_{LM^{p,\infty;\kappa}}m(\mathcal{B})^{\kappa/p}\big)\cdot\Big(\frac{1}{q/p-1}\Big)^{1/q} t^{1/q-1/p}.
\end{split}
\end{equation*}
We can choose $t>0$ so that
\begin{equation*}
\|f\|_{LM^{p,\infty;\kappa}}m(\mathcal{B})^{\kappa/p}\cdot t^{1/q-1/p}=\|f\|_{L^\infty}\cdot t^{1/q},
\end{equation*}
that is,
\begin{equation*}
t:=\left(\frac{\|f\|_{LM^{p,\infty;\kappa}}}{\|f\|_{L^\infty}}\right)^pm(\mathcal{B})^{\kappa}.
\end{equation*}
Therefore, we conclude from \eqref{elemen} that
\begin{equation*}
\begin{split}
\frac{1}{m(\mathcal{B})^{\kappa/q}}\|f\|_{L^{q}(\mathcal{B})}
&\leq\Big[\Big(\frac{p}{q-p}\Big)^{1/q}+1\Big]\frac{1}{m(\mathcal{B})^{\kappa/q}}\|f\|_{L^\infty}\cdot t^{1/q}\\
&=\Big[\Big(\frac{p}{q-p}\Big)^{1/q}+1\Big]
\cdot\big(\|f\|_{LM^{p,\infty;\kappa}}\big)^{p/q}\cdot\big(\|f\|_{L^\infty}\big)^{1-p/q}\\
&\leq \Big[2^{1-1/q}\Big(\frac{q}{q-p}\Big)^{1/q}\Big]
\cdot\big(\|f\|_{LM^{p,\infty;\kappa}}\big)^{p/q}\cdot\big(\|f\|_{L^\infty}\big)^{1-p/q}.
\end{split}
\end{equation*}
By taking the supremum over all balls $\mathcal{B}$ in $\mathbb R^n$, we complete the proof of Theorem $\ref{thmwang2}$.
\end{proof}

\begin{proof}[Proof of Theorem $\ref{weak}$]
Let $f\in LM^{p,r;\kappa}(\mathbb R^n)\cap \mathrm{BMO}(\mathbb R^n)$ with $1<p<\infty$, $p\leq r\leq\infty$ and $0<\kappa<1$. First of all, from the previous estimate \eqref{key1}, we know that for any fixed ball $\mathcal{B}$ in $\mathbb R^n$,
\begin{equation}\label{key3}
\begin{split}
(f\cdot\mathbf{1}_{\mathcal{B}})^{**}(t)=f^{**}_{\mathcal{B}}(t)
&\leq\Big(\frac{p'}{r'}\Big)^{1/{r'}}\frac{1}{t^{1/p}}\cdot\|f\cdot\mathbf{1}_{\mathcal{B}}\|_{L^{p,r}}\\
&=\Big(\frac{p'}{r'}\Big)^{1/{r'}}\frac{1}{t^{1/p}}\cdot\|f\|_{L^{p,r}(\mathcal{B})}.
\end{split}
\end{equation}
Here $r'=1$ when $r=\infty$. Next we claim that for each fixed ball $\mathcal{B}\subset\mathbb R^n$ and $f\in\mathrm{BMO}(\mathbb R^n)$, then $f\cdot\mathbf{1}_{\mathcal{B}}\in \mathrm{BMO}(\mathbb R^n)$, and the following inequality holds:
\begin{equation}\label{diff}
\|f\cdot\mathbf{1}_{\mathcal{B}}\|_{\mathrm{BMO}}\leq 2\|f\|_{\mathrm{BMO}}.
\end{equation}
In fact, for any ball $\mathcal{R}\subset\mathbb R^n$, by a simple calculation, we have
\begin{equation*}
\begin{split}
&\frac{1}{m(\mathcal{R})}\int_{\mathcal{R}}\big|(f\cdot\mathbf{1}_{\mathcal{B}})(y)-(f\cdot\mathbf{1}_{\mathcal{B}})_{\mathcal{R}}\big|\,dy\\
&\leq\frac{1}{m(\mathcal{R})}\int_{\mathcal{R}}\big|(f\cdot\mathbf{1}_{\mathcal{B}})(y)-f_{\mathcal{R}}\big|\,dy
+\frac{1}{m(\mathcal{R})}\int_{\mathcal{R}}\big|f_{\mathcal{R}}-(f\cdot\mathbf{1}_{\mathcal{B}})_{\mathcal{R}}\big|\,dy\\
&\leq \frac{2}{m(\mathcal{R})}\int_{\mathcal{R}}\big|(f\cdot\mathbf{1}_{\mathcal{B}})(y)-f_{\mathcal{R}}\big|\,dy
=\frac{2}{m(\mathcal{R})}\int_{\mathcal{R}\cap \mathcal{B}}\big|f(y)-f_{\mathcal{R}}\big|\,dy\\
&\leq\frac{2}{m(\mathcal{R})}\int_{\mathcal{R}}\big|f(y)-f_{\mathcal{R}}\big|\,dy\leq 2\|f\|_{\mathrm{BMO}}.
\end{split}
\end{equation*}
This proves \eqref{diff} by taking the supremum over all the balls $\mathcal{R}\subset\mathbb R^n$. This fact, together with Lemma \ref{lemmaw}, implies that $f\cdot\mathbf{1}_{\mathcal{B}}\in \mathcal{W}$, and
\begin{equation}\label{key5}
\|f\cdot\mathbf{1}_{\mathcal{B}}\|_{\mathcal{W}}\lesssim\|f\cdot\mathbf{1}_{\mathcal{B}}\|_{\mathrm{BMO}}
\lesssim\|f\|_{\mathrm{BMO}}.
\end{equation}
It then follows from \eqref{important} and \eqref{key5} that for any $0<s<t$,
\begin{equation*}
\begin{split}
f^{**}_{\mathcal{B}}(s)-f^{**}_{\mathcal{B}}(t)&=\int_{s}^t-\frac{d}{d\tau}\big(f^{**}_{\mathcal{B}}(\tau)\big)\,d\tau\\
&=\int_{s}^t\frac{f^{**}_{\mathcal{B}}(\tau)-f^{*}_{\mathcal{B}}(\tau)}{\tau}\,d\tau
\lesssim\int_{s}^t\frac{\|f\|_{\mathrm{BMO}}}{\tau}\,d\tau\\
&=\|f\|_{\mathrm{BMO}}\cdot\big[\log t-\log s\big]=\|f\|_{\mathrm{BMO}}\cdot\log\frac{t}{\,s\,},
\end{split}
\end{equation*}
which in turn implies that
\begin{equation}\label{key6}
f^{**}_{\mathcal{B}}(s)\lesssim f^{**}_{\mathcal{B}}(t)+\|f\|_{\mathrm{BMO}}\cdot\log\frac{t}{\,s\,},
\end{equation}
whenever $0<s<t$. For any $p<q<\infty$ and for any ball $\mathcal{B}$ in $\mathbb R^n$, by equation \eqref{equality1}, we can write
\begin{equation*}
\begin{split}
&\frac{1}{m(\mathcal{B})^{\kappa/q}}\|f\|_{L^{q}(\mathcal{B})}
=\frac{1}{m(\mathcal{B})^{\kappa/q}}\bigg(\int_0^{\infty}\big[f^{*}_{\mathcal{B}}(s)\big]^qds\bigg)^{1/q}\\
&\leq\frac{1}{m(\mathcal{B})^{\kappa/q}}\bigg(\int_0^{t}\big[f^{*}_{\mathcal{B}}(s)\big]^qds\bigg)^{1/q}
+\frac{1}{m(\mathcal{B})^{\kappa/q}}\bigg(\int_{t}^{\infty}\big[f^{*}_{\mathcal{B}}(s)\big]^qds\bigg)^{1/q}\\
&:=J'_1(t)+J'_2(t),
\end{split}
\end{equation*}
where $t>0$ is a constant to be fixed below. Let us estimate the first term $J'_1(t)$. Note that for any $s>0$,
\begin{equation}\label{fb}
f^{*}_{\mathcal{B}}(s)\leq f^{**}_{\mathcal{B}}(s)<+\infty.
\end{equation}
By Minkowski's inequality, \eqref{fb} and \eqref{key6}, we obtain that for all $t>0$,
\begin{equation*}
\begin{split}
J'_1(t)&\leq\frac{1}{m(\mathcal{B})^{\kappa/q}}\bigg(\int_0^{t}\big[f^{**}_{\mathcal{B}}(s)\big]^qds\bigg)^{1/q}\\
&\lesssim\frac{1}{m(\mathcal{B})^{\kappa/q}}\bigg(\int_0^{t}\big[f^{**}_{\mathcal{B}}(t)\big]^qds\bigg)^{1/q}
+\frac{1}{m(\mathcal{B})^{\kappa/q}}\bigg(\int_0^{t}\Big[\|f\|_{\mathrm{BMO}}\cdot\log\frac{t}{\,s\,}\Big]^qds\bigg)^{1/q}\\
&=\frac{1}{m(\mathcal{B})^{\kappa/q}}f^{**}_{\mathcal{B}}(t)\cdot t^{1/q}
+\frac{1}{m(\mathcal{B})^{\kappa/q}}\|f\|_{\mathrm{BMO}}\cdot\bigg(\int_0^{t}\Big[\log\frac{t}{\,s\,}\Big]^qds\bigg)^{1/q}.
\end{split}
\end{equation*}
Furthermore, it follows from \eqref{key3}, \eqref{weneed} and \eqref{aso} that
\begin{equation*}
\begin{split}
J'_1(t)&\lesssim \frac{1}{m(\mathcal{B})^{\kappa/q}}t^{1/q-1/p}\cdot\|f\|_{L^{p,r}(\mathcal{B})}
+ \frac{1}{m(\mathcal{B})^{\kappa/q}}\Gamma(q+1)^{1/q}\|f\|_{\mathrm{BMO}}\cdot t^{1/q}\\
&\lesssim  \frac{1}{m(\mathcal{B})^{\kappa/q}}
\cdot q\Big[t^{1/q-1/p}\cdot\|f\|_{L^{p,r}(\mathcal{B})}+\|f\|_{\mathrm{BMO}}\cdot t^{1/q}\Big]
\end{split}
\end{equation*}
with the implicit positive constant independent of $q$. Next we estimate the second term $J'_2(t)$.

$(1)$ When $r<\infty$ and $p\leq r<q<\infty$, since $f^{*}_{\mathcal{B}}$ is non-increasing and $r/p-1\geq0$, we get
\begin{equation*}
\begin{split}
J'_2(t)&=\frac{1}{m(\mathcal{B})^{\kappa/q}}\bigg(\int_{t}^{\infty}\big[s^{1/p-1/r}f^{*}_{\mathcal{B}}(s)\big]^r
\cdot f^{*}_{\mathcal{B}}(s)^{q-r}s^{1-r/p}ds\bigg)^{1/q}\\
&\leq\frac{1}{m(\mathcal{B})^{\kappa/q}}\bigg(\int_{t}^{\infty}\big[s^{1/p-1/r}f^{*}_{\mathcal{B}}(s)\big]^rds\bigg)^{1/q}
\cdot \Big[f^{*}_{\mathcal{B}}(t)^{q-r}t^{1-r/p}\Big]^{1/q}\\
&\leq\frac{1}{m(\mathcal{B})^{\kappa/q}}\bigg(\int_{0}^{\infty}\big[s^{1/p}f^{*}_{\mathcal{B}}(s)\big]^r\frac{ds}{s}\bigg)^{1/q}
\cdot \Big[f^{*}_{\mathcal{B}}(t)^{q-r}t^{1-r/p}\Big]^{1/q}.
\end{split}
\end{equation*}
Moreover, by using \eqref{fb}, \eqref{key3} and the fact that $q-r>0$, we thus obtain
\begin{equation*}
\begin{split}
J'_2(t)&\leq\frac{1}{m(\mathcal{B})^{\kappa/q}}\big(\|f\|_{L^{p,r}(\mathcal{B})}\big)^{r/q}
\cdot \Big[f^{**}_{\mathcal{B}}(t)^{q-r}t^{1-r/p}\Big]^{1/q}\\
&\leq \frac{1}{m(\mathcal{B})^{\kappa/q}}\big(\|f\|_{L^{p,r}(\mathcal{B})}\big)^{r/q}
\cdot \Big[\Big(\frac{p'}{r'}\Big)^{1/{r'}}\Big]^{1-r/q}
t^{1/q-1/p}\big(\|f\|_{L^{p,r}(\mathcal{B})}\big)^{1-r/q}\\
&=\frac{1}{m(\mathcal{B})^{\kappa/q}}\Big[\Big(\frac{p'}{r'}\Big)^{1/{r'}}\Big]^{1-r/q}
t^{1/q-1/p}\cdot\|f\|_{L^{p,r}(\mathcal{B})}.
\end{split}
\end{equation*}
$(2)$ When $r=\infty$ and $p<q<\infty$, in this case, we get
\begin{equation*}
\begin{split}
J'_2(t)&=\frac{1}{m(\mathcal{B})^{\kappa/q}}\bigg(\int_{t}^{\infty}
\big[s^{1/p}f^{*}_{\mathcal{B}}(s)\big]^{q}\frac{1}{s^{q/p}}ds\bigg)^{1/q}\\
&\leq\frac{1}{m(\mathcal{B})^{\kappa/q}}\Big[\sup_{s>0}s^{1/p}f^{*}_{\mathcal{B}}(s)\Big]
\cdot\bigg(\int_{t}^{\infty}\frac{1}{s^{q/p}}ds\bigg)^{1/q}\\
&=\frac{1}{m(\mathcal{B})^{\kappa/q}}\Big(\frac{p}{q-p}\Big)^{1/q} t^{1/q-1/p}\cdot\|f\|_{L^{p,\infty}(\mathcal{B})},
\end{split}
\end{equation*}
where the last integral converges since $q/p>1$. As mentioned before, for all $q$ with $p<q<\infty$,
\begin{equation*}
\Big[\Big(\frac{p'}{r'}\Big)^{1/{r'}}\Big]^{1-r/q}<\Big(\frac{p'}{r'}\Big)^{1/{r'}}\quad \&\quad
\Big(\frac{p}{q-p}\Big)^{1/q}=\mathcal{O}(1)\quad \mbox{as}~ q\to\infty.
\end{equation*}
Consequently, by the definition of $LM^{p,r;\kappa}(\mathbb R^n)$ with $p\leq r\leq\infty$, we conclude that for any ball $\mathcal{B}$ in $\mathbb R^n$,
\begin{equation*}
\begin{split}
&\frac{1}{m(\mathcal{B})^{\kappa/q}}\|f\|_{L^{q}(\mathcal{B})}\\
&\lesssim \frac{1}{m(\mathcal{B})^{\kappa/q}}
\cdot q\Big[t^{1/q-1/p}\cdot\|f\|_{L^{p,r}(\mathcal{B})}+\|f\|_{\mathrm{BMO}}\cdot t^{1/q}\Big]\\
&\lesssim\frac{1}{m(\mathcal{B})^{\kappa/q}}
\cdot q\Big[t^{1/q-1/p}\cdot\|f\|_{LM^{p,r;\kappa}}m(\mathcal{B})^{\kappa/p}+\|f\|_{\mathrm{BMO}}\cdot t^{1/q}\Big],
\end{split}
\end{equation*}
where the implicit positive constant is independent of $q$. We now take $t>0$ in this estimate so that
\begin{equation*}
t^{1/q-1/p}\cdot\|f\|_{LM^{p,r;\kappa}}m(\mathcal{B})^{\kappa/p}=\|f\|_{\mathrm{BMO}}\cdot t^{1/q}.
\end{equation*}
In other words,
\begin{equation*}
t:=\left(\frac{\|f\|_{LM^{p,r;\kappa}}}{\|f\|_{\mathrm{BMO}}}\right)^pm(\mathcal{B})^{\kappa}.
\end{equation*}
Therefore, by such choice of $t>0$,
\begin{equation*}
\begin{split}
\frac{1}{m(\mathcal{B})^{\kappa/q}}\|f\|_{L^{q}(\mathcal{B})}
&\lesssim\frac{1}{m(\mathcal{B})^{\kappa/q}}\cdot q\|f\|_{\mathrm{BMO}}\cdot t^{1/q}\\
&\leq q\cdot\big(\|f\|_{LM^{p,r;\kappa}}\big)^{p/q}\cdot\big(\|f\|_{\mathrm{BMO}}\big)^{1-p/q}.
\end{split}
\end{equation*}
Finally, by taking the supremum over all balls $\mathcal{B}$ in $\mathbb R^n$, we complete the proof of Theorem \ref{weak}.
\end{proof}

\begin{rem}\label{citeremark3}
(1) Theorem $\ref{thmwang2}$ tells us that the embedding constant for the case $p<r<\infty$ is 2 and the asymptotically embedding constant for the case $p<r=\infty$ is also 2 by means of \eqref{22}.

(2) In particular, by putting $r=p$ in Theorem \ref{weak}, we immediately have
\begin{equation*}
LM^{p,p;\kappa}(\mathbb R^n)\cap \mathrm{BMO}(\mathbb R^n)
=M^{p;\kappa}(\mathbb R^n)\cap \mathrm{BMO}(\mathbb R^n)\hookrightarrow  M^{q;\kappa}(\mathbb R^n)
\end{equation*}
for all $q$ with $p<q<\infty$, where $1<p<\infty$ and $0<\kappa<1$. Moreover,
\begin{equation*}
\|f\|_{M^{q;\kappa}}\lesssim q\cdot\big(\|f\|_{M^{p;\kappa}}\big)^{p/q}\cdot\big(\|f\|_{\mathrm{BMO}}\big)^{1-p/q}
\end{equation*}
with the implicit positive constant independent of $q$. Actually, the above result is also true for the endpoint case $p=1$, which was recently proved by the author in \cite{wang} using a local version of Calderon--Zygmund decomposition. That is, it can be shown that for any $1\leq p<q<\infty$ and $0<\kappa<1$,
\begin{equation}\label{chenwang}
\|f\|_{M^{q;\kappa}}\leq C(n,p)q\cdot\big(\|f\|_{M^{p;\kappa}}\big)^{p/q}\cdot\big(\|f\|_{\mathrm{BMO}}\big)^{1-p/q}
\end{equation}
holds true for all $f\in M^{p;\kappa}(\mathbb R^n)\cap \mathrm{BMO}(\mathbb R^n)$, where $C(n,p)$ is a positive constant depending only on the dimension $n$ and $p$. This leads to a natural conjecture that whether $p=1$ is included in Theorem \ref{weak}. Unfortunately, the key estimate \eqref{key3} fails when $p=1$. So we need to look for a new method. Furthermore, in analogy with \eqref{chen}
(see \cite[Theorem 2.2 and Remark (iii)]{Kozono2}), we believe that the (linear) growth order $q$ in Theorem \ref{weak} is best possible as $q\to\infty$.

(3) Inspired by Theorem \ref{strong}, a natural question appearing here is whether the space $LM^{p,r;\kappa}(\mathbb R^n)\cap \mathrm{BMO}(\mathbb R^n)$ in Theorem \ref{weak} can be replaced by $LM^{p,r;\kappa}(\mathbb R^n)\cap \mathcal{W}$. The main difficulty is to show that for each fixed ball $\mathcal{B}\subset\mathbb R^n$ and $f\in \mathcal{W}$, then $f\cdot\mathbf{1}_{\mathcal{B}}\in \mathcal{W}$, and
\begin{equation*}
\|f\cdot\mathbf{1}_{\mathcal{B}}\|_{\mathcal{W}}\lesssim \|f\|_{\mathcal{W}}.
\end{equation*}
Unfortunately, the key technique in \eqref{diff} fails for the space $\mathcal{W}$. In this situation, we need to look for a new method.
\end{rem}

\subsection{Related bilinear estimates and John--Nirenberg type inequalities}
Let us now proceed with some applications. Let $1<p<\infty$, $0<\kappa_1,\kappa_2<1$ and $p\leq r\leq\infty$. As a consequence of Theorem \ref{weak}, we can show that the following bilinear estimate
\begin{equation}\label{bilinear21}
\|\mathcal{F}\cdot \mathcal{G}\|_{M^{p;\kappa}}\lesssim p^2
\Big(\|\mathcal{F}\|_{LM^{p,r;\kappa_1}}\|\mathcal{G}\|_{\mathrm{BMO}}
+\|\mathcal{G}\|_{LM^{p,r;\kappa_2}}\|\mathcal{F}\|_{\mathrm{BMO}}\Big)
\end{equation}
holds for two functions $\mathcal{F}\in LM^{p,r;\kappa_1}(\mathbb R^n)\cap\mathrm{BMO}(\mathbb R^n)$ and $\mathcal{G}\in LM^{p,r;\kappa_2}(\mathbb R^n)\cap\mathrm{BMO}(\mathbb R^n)$, where $\kappa={(\kappa_1+\kappa_2)}/2$ and the implicit positive constant is independent of both $p$ and the functions $\mathcal{F},\mathcal{G}$. In fact, by using H\"older's inequality, we can deduce that for any given ball $\mathcal{B}$ in $\mathbb R^n$,
\begin{equation*}
\begin{split}
\bigg(\int_{\mathcal{B}}|\mathcal{F}(y)\cdot \mathcal{G}(y)|^p\,dy\bigg)^{1/p}
&\leq\bigg(\int_{\mathcal{B}}|\mathcal{F}(y)|^{2p}dy\bigg)^{1/{2p}}
\cdot\bigg(\int_{\mathcal{B}}|\mathcal{G}(y)|^{2p}dy\bigg)^{1/{2p}}\\
&\leq\Big(\|\mathcal{F}\|_{M^{2p;\kappa_1}}m(\mathcal{B})^{{\kappa_1}/{2p}}\Big)
\cdot\Big(\|\mathcal{G}\|_{M^{2p;\kappa_2}}m(\mathcal{B})^{{\kappa_2}/{2p}}\Big)\\
&=\Big(\|\mathcal{F}\|_{M^{2p;\kappa_1}}\cdot\|\mathcal{G}\|_{M^{2p;\kappa_2}}\Big)\cdot m(\mathcal{B})^{\kappa/p},
\end{split}
\end{equation*}
since $\kappa={(\kappa_1+\kappa_2)}/2$, which in turn implies that the function $\mathcal{F}\cdot \mathcal{G}$ is in $M^{p;\kappa}(\mathbb R^n)$, and
\begin{equation}\label{holdermorrey}
\|\mathcal{F}\cdot \mathcal{G}\|_{M^{p;\kappa}}
\leq\|\mathcal{F}\|_{M^{2p;\kappa_1}}\cdot\|\mathcal{G}\|_{M^{2p;\kappa_2}}.
\end{equation}
Moreover, by using Theorem \ref{weak} with $q=2p$ and the inequality $2\sqrt{AB}\leq A+B$, $A,B>0$, we then have
\begin{equation*}
\begin{split}
\|\mathcal{F}\cdot \mathcal{G}\|_{M^{p;\kappa}}
&\lesssim p^2
\Big(\|\mathcal{F}\|_{LM^{p,r;\kappa_1}}^{1/2}\|\mathcal{F}\|_{\mathrm{BMO}}^{1/2}
\cdot\|\mathcal{G}\|_{LM^{p,r;\kappa_2}}^{1/2}\|\mathcal{G}\|_{\mathrm{BMO}}^{1/2}\Big)\\
&=p^2
\Big(\|\mathcal{F}\|_{LM^{p,r;\kappa_1}}\|\mathcal{G}\|_{\mathrm{BMO}}
\cdot\|\mathcal{G}\|_{LM^{p,r;\kappa_2}}\|\mathcal{F}\|_{\mathrm{BMO}}\Big)^{1/2}\\
&\lesssim p^2
\Big(\|\mathcal{F}\|_{LM^{p,r;\kappa_1}}\|\mathcal{G}\|_{\mathrm{BMO}}
+\|\mathcal{G}\|_{LM^{p,r;\kappa_2}}\|\mathcal{F}\|_{\mathrm{BMO}}\Big).
\end{split}
\end{equation*}
This proves \eqref{bilinear21}. In particular, by taking $r=p$, we get
\begin{equation}\label{bilinear22}
\|\mathcal{F}\cdot \mathcal{G}\|_{M^{p;\kappa}}\leq C(p,n)
\Big(\|\mathcal{F}\|_{M^{p;\kappa_1}}\|\mathcal{G}\|_{\mathrm{BMO}}
+\|\mathcal{G}\|_{M^{p;\kappa_2}}\|\mathcal{F}\|_{\mathrm{BMO}}\Big),
\end{equation}
for $\mathcal{F}\in M^{p;\kappa_1}(\mathbb R^n)\cap\mathrm{BMO}(\mathbb R^n)$ and $\mathcal{G}\in M^{p;\kappa_2}(\mathbb R^n)\cap\mathrm{BMO}(\mathbb R^n)$, when $1<p<\infty$ and $\kappa={(\kappa_1+\kappa_2)}/2$. Within the framework of Lorentz and Lorentz--Morrey spaces, these bilinear estimates \eqref{bilinear21}, \eqref{bilinear22} as well as \eqref{bilinear1} may be used in the study of certain problems (such as the global existence, uniqueness and regularity of weak solutions) in PDEs. In general, for the case $3\leq K\in \mathbb{N}$, we can also show that the following estimate
\begin{equation*}
\begin{split}
&\bigg\|\prod_{j=1}^K \mathcal{F}_j\bigg\|_{M^{p;\kappa}}\\
\lesssim& p^{K}
\sum_{j=1}^{K}\|\mathcal{F}_1\|_{\mathrm{BMO}}\cdots\|\mathcal{F}_{j-1}\|_{\mathrm{BMO}}
\|\mathcal{F}_j\|_{LM^{p,r;\kappa_j}}\|\mathcal{F}_{j+1}\|_{\mathrm{BMO}}\cdots\|\mathcal{F}_K\|_{\mathrm{BMO}}
\end{split}
\end{equation*}
holds for $\mathcal{F}_j\in LM^{p,r;\kappa_j}\cap\mathrm{BMO}(\mathbb R^n)$ with $1<p<\infty$, $p\leq r\leq\infty$, $0<\kappa_j<1$ and $\kappa=\sum_{j=1}^{K}\kappa_j/K$, $j=1,2,\dots,K$.

Let $p\geq1$ and $\Phi_p(t)$ be the same as before. As a direct consequence of Theorem \ref{weak}, we have the following result, which can be viewed as the John--Nirenberg type inequality in the setting of Lorentz--Morrey spaces.
For each fixed ball $\mathcal{B}$ in $\mathbb R^n$, there exists a positive constant $\widetilde{C}({n,p,r,\beta})$ depending on $n,p,r$ and $\beta$ such that
\begin{equation}\label{bilinear24}
\frac{1}{m(\mathcal{B})^{\kappa}}\int_{\mathcal{B}}\bigg[\Phi_p\Big(\beta\frac{|f(x)|}{\|f\|_{\mathrm{BMO}}}\Big)\bigg]\,dx
\leq \widetilde{C}({n,p,r,\beta})\bigg(\frac{\|f\|_{LM^{p,r;\kappa}}}{\|f\|_{\mathrm{BMO}}}\bigg)^p
\end{equation}
holds for all $f\in LM^{p,r;\kappa}(\mathbb R^n)\cap\mathrm{BMO}(\mathbb R^n)$ with $1<p<\infty$, $0<\kappa<1$ and $p\leq r\leq\infty$, provided that $0<\beta<\beta_n$, where $\beta_n$ is a positive constant which depends on $n,p$ and $r$.

Using the same strategy as in \eqref{bilinear14}, for any ball $\mathcal{B}$ in $\mathbb R^n$ and for $p\leq j<\infty$, we have that
\begin{equation}\label{wanguseinequ2}
\frac{1}{m(\mathcal{B})^{\kappa}}\int_{\mathcal{B}}|f(x)|^j\,dx
=\big\|f\big\|^j_{M^{j;\kappa}}\leq \widetilde{C}\big(n,p,r\big)^j
j^j\cdot\big(\|f\|_{LM^{p,r;\kappa}}\big)^p\cdot\big(\|f\|_{\mathrm{BMO}}\big)^{j-p}
\end{equation}
holds for all $f\in LM^{p,r;\kappa}(\mathbb R^n)\cap\mathrm{BMO}(\mathbb R^n)$, where $\widetilde{C}(n,p,r)$ is a positive constant depending on $n,r$ and $p$, but not on $j$. Therefore, changing the order of integration and summation and using the estimate \eqref{wanguseinequ2}, we obtain
\begin{equation*}
\begin{split}
&\frac{1}{m(\mathcal{B})^{\kappa}}\int_{\mathcal{B}}\bigg[\Phi_p\Big(\beta\frac{|f(x)|}{\|f\|_{\mathrm{BMO}}}\Big)\bigg]\,dx\\
&=\frac{1}{m(\mathcal{B})^{\kappa}}\int_{\mathcal{B}}
\sum_{j=p}^{\infty}\frac{\beta^j}{j!}\bigg(\frac{|f(x)|}{\|f\|_{\mathrm{BMO}}}\bigg)^jdx\\
&=\sum_{j=p}^{\infty}\frac{\beta^j}{j!}\cdot\frac{1}{\|f\|^j_{\mathrm{BMO}}}
\frac{1}{m(\mathcal{B})^{\kappa}}\int_{\mathcal{B}}|f(x)|^j\,dx\\
&\leq \sum_{j=p}^{\infty}\frac{\beta^j}{j!}\cdot\frac{1}{\|f\|^j_{\mathrm{BMO}}}\big[\widetilde{C}(n,p,r)j\big]^{j}
\big(\|f\|_{LM^{p,r;\kappa}}\big)^p\cdot\big(\|f\|_{\mathrm{BMO}}\big)^{j-p}\\
&=\sum_{j=p}^{\infty}\frac{[\widetilde{C}(n,p,r)\cdot\beta]^j\cdot j^{j}}{j!}
\bigg(\frac{\|f\|_{LM^{p,r;\kappa}}}{\|f\|_{\mathrm{BMO}}}\bigg)^p.
\end{split}
\end{equation*}
As in the proof of \eqref{bilinear14}, we also know that the above series is convergent provided $\widetilde{C}(n,p,r)\cdot\beta<e^{-1}$. That is, for
\begin{equation*}
0<\beta<\beta_n:=\big[\widetilde{C}(n,p,r)\cdot e\big]^{-1},
\end{equation*}
we see that \eqref{bilinear24} holds with $\widetilde{C}({n,p,r,\beta})$ is the sum of the above convergent series. In particular, by taking $r=p$ (in this special case, recall that the range of $p$ is $[1,\infty)$, as mentioned in Remark \ref{citeremark3}, and $\Phi_p(t)=\exp(t)-1$ when $p=1$), we have the following inequality
\begin{equation}\label{jninequality2}
\frac{1}{m(\mathcal{B})^{\kappa}}\int_{\mathcal{B}}
\bigg[\exp\Big(\beta\frac{|f(x)|}{\|f\|_{\mathrm{BMO}}}\Big)-1\bigg]\,dx
\leq \widetilde{C}({n,\beta})\bigg(\frac{\|f\|_{M^{1;\kappa}}}{\|f\|_{\mathrm{BMO}}}\bigg),
\end{equation}
whenever $f\in M^{1;\kappa}(\mathbb R^n)\cap\mathrm{BMO}(\mathbb R^n)$ with $0<\kappa<1$. A simple calculation gives
\begin{equation*}
\begin{split}
&\frac{1}{m(\mathcal{B})^{\kappa}}\int_{\mathcal{B}}
\bigg[\exp\Big(\beta\frac{|f(x)|}{\|f\|_{\mathrm{BMO}}}\Big)-1\bigg]\,dx\\
&=\frac{1}{m(\mathcal{B})^{\kappa}}\int_{\mathcal{B}}\sum_{j=1}^{\infty}
\frac{\beta^j}{j!}\bigg(\frac{|f(x)|}{\|f\|_{\mathrm{BMO}}}\bigg)^jdx\\
&=\sum_{j=1}^{\infty}\frac{\beta^j}{j!}\cdot\frac{1}{\|f\|^j_{\mathrm{BMO}}}
\frac{1}{m(\mathcal{B})^{\kappa}}\int_{\mathcal{B}}|f(x)|^j\,dx.
\end{split}
\end{equation*}
Furthermore, for any $\lambda>0$, we see that
\begin{equation*}
\begin{split}
\frac{1}{m(\mathcal{B})^{\kappa}}\int_{\mathcal{B}}|f(x)|^j\,dx
&\geq\frac{1}{m(\mathcal{B})^{\kappa}}\int_{\{x\in \mathcal{B}:|f(x)|>\lambda\}}|f(x)|^j\,dx\\
&>\frac{1}{m(\mathcal{B})^{\kappa}}\lambda^j\cdot m\big(\big\{x\in\mathcal{B}:|f(x)|>\lambda\big\}\big).
\end{split}
\end{equation*}
From this, it follows that
\begin{equation}\label{jninequality3}
\begin{split}
&\frac{1}{m(\mathcal{B})^{\kappa}}\int_{\mathcal{B}}
\bigg[\exp\Big(\beta\frac{|f(x)|}{\|f\|_{\mathrm{BMO}}}\Big)-1\bigg]\,dx\\
&\geq\frac{1}{m(\mathcal{B})^{\kappa}}
\sum_{j=1}^{\infty}\frac{\beta^j}{j!}\cdot\bigg(\frac{\lambda}{\|f\|_{\mathrm{BMO}}}\bigg)^j
\cdot m\big(\big\{x\in\mathcal{B}:|f(x)|>\lambda\big\}\big)\\
&=\frac{1}{m(\mathcal{B})^{\kappa}}
\Big[\exp\Big(\frac{\beta\lambda}{\|f\|_{\mathrm{BMO}}}\Big)-1\Big]
\cdot m\big(\big\{x\in\mathcal{B}:|f(x)|>\lambda\big\}\big).
\end{split}
\end{equation}
Hence, we conclude from \eqref{jninequality2} and \eqref{jninequality3} that for any ball $\mathcal{B}$ in $\mathbb R^n$ and for $\lambda>0$,
\begin{equation*}
\Big[\exp\Big(\frac{\beta\lambda}{\|f\|_{\mathrm{BMO}}}\Big)-1\Big]
\cdot\frac{m\big(\big\{x\in\mathcal{B}:|f(x)|>\lambda\big\}\big)}{m(\mathcal{B})^{\kappa}}
\leq \widetilde{C}({n,\beta})\bigg(\frac{\|f\|_{M^{1;\kappa}}}{\|f\|_{\mathrm{BMO}}}\bigg).
\end{equation*}
This in turn implies that
\begin{equation*}
\frac{m\big(\big\{x\in\mathcal{B}:|f(x)|>\lambda\big\}\big)}{m(\mathcal{B})^{\kappa}}
\leq \widetilde{C}({n,\beta})\bigg(\frac{\|f\|_{M^{1;\kappa}}}{\|f\|_{\mathrm{BMO}}}\bigg)
\cdot\Big[\exp\Big(\frac{\beta\lambda}{\|f\|_{\mathrm{BMO}}}\Big)-1\Big]^{-1}.
\end{equation*}
If $\lambda>\|f\|_{\mathrm{BMO}}$, then
\begin{equation*}
\Big[\exp\Big(\frac{\beta\lambda}{\|f\|_{\mathrm{BMO}}}\Big)-1\Big]^{-1}
\leq\exp\Big(-\frac{\beta\lambda}{\|f\|_{\mathrm{BMO}}}\Big)
\cdot\frac{1}{1-e^{-\beta}},
\end{equation*}
and hence
\begin{equation*}
\frac{m\big(\big\{x\in\mathcal{B}:|f(x)|>\lambda\big\}\big)}{m(\mathcal{B})^{\kappa}}
\leq \widetilde{C}({n,\beta})\bigg(\frac{\|f\|_{M^{1;\kappa}}}{\|f\|_{\mathrm{BMO}}}\bigg)
\exp\Big(-\frac{\beta\lambda}{\|f\|_{\mathrm{BMO}}}\Big),
\end{equation*}
whenever $\lambda>\|f\|_{\mathrm{BMO}}$. This interesting estimate is completely new(the John--Nirenberg type inequality in the setting of Morrey spaces).

\section{Appendix}
In the last section, we give the proof of Theorem \ref{appendix}. Suppose that $f\in LM^{p,\infty;\kappa_{*}}(\mathbb R^n)$ with $1<p<\infty$ and $0<\kappa_{*}<1$. We are going to prove that $f$ is in $LM^{q,r;\kappa}(\mathbb R^n)$ for $1\leq q<p<\infty$, $1\leq r<\infty$ and $0<\kappa<1$. Moreover,
\begin{equation}\label{desiredes}
\|f\|_{LM^{q,r;\kappa}}\leq \Big[\frac{pq}{r(p-q)}\Big]^{\frac{1}{r}}\|f\|_{LM^{p,\infty;\kappa_{*}}},
\end{equation} 
whenever $(1-\kappa)p=(1-\kappa_*)q$. To do this, for any fixed ball $\mathcal{B}$ in $\mathbb R^n$, we can write
\begin{equation*}
\begin{split}
&q\int_0^{\infty}\Big[\lambda d_{f;\mathcal{B}}(\lambda)^{1/q}\Big]^r\frac{d\lambda}{\lambda}\\
&=q\int_0^{\Lambda}\Big[\lambda d_{f;\mathcal{B}}(\lambda)^{1/q}\Big]^r\frac{d\lambda}{\lambda}
+q\int_{\Lambda}^{\infty}\Big[\lambda d_{f;\mathcal{B}}(\lambda)^{1/q}\Big]^r\frac{d\lambda}{\lambda}\\
&:=\mathrm{I+II},
\end{split}
\end{equation*}
where $\Lambda$ is a positive constant to be determined later. As for the estimate for the first term $\mathrm{I}$, it is clear that for all $\lambda>0$,
\begin{equation*}
d_{f;\mathcal{B}}(\lambda)=m\big(\big\{x\in \mathcal{B}:|f(x)|>\lambda\big\}\big)\leq m(\mathcal{B}).
\end{equation*}
Then we have
\begin{equation*}
\mathrm{I}\leq m(\mathcal{B})^{\frac{r}{q}}q\int_0^{\Lambda}\lambda^{r-1}d\lambda
=m(\mathcal{B})^{\frac{r}{q}}\frac{q}{\,r\,}\cdot\Lambda^r.
\end{equation*}
Let us now turn to the estimate of the second term $\mathrm{II}$. By Proposition \ref{prop21}, we know that for any ball $\mathcal{B}$ in $\mathbb R^n$,
\begin{equation*}
\|f\|_{L^{p,\infty}(\mathcal{B})}=\sup_{s>0}\Big[s^{1/p}f^{*}_{\mathcal{B}}(s)\Big]
=\sup_{\lambda>0}\Big[\lambda d_{f;\mathcal{B}}(\lambda)^{1/p}\Big],
\end{equation*}
and hence
\begin{equation*}
\begin{split}
\mathrm{II}&\leq q\int_{\Lambda}^{\infty}
\Big[\lambda\Big(\frac{\|f\|_{L^{p,\infty}(\mathcal{B})}}{\lambda}\Big)^{\frac{p}{q}}\Big]^r\frac{d\lambda}{\lambda}\\
&=\|f\|_{L^{p,\infty}(\mathcal{B})}^{\frac{pr}{q}}\frac{q^2}{r(p-q)}\cdot\Lambda^{r-\frac{rp}{q}}.
\end{split}
\end{equation*}
Combining the estimates for $\mathrm{I}$ and $\mathrm{II}$ leads to
\begin{equation*}
q\int_0^{\infty}\Big[\lambda d_{f;\mathcal{B}}(\lambda)^{1/q}\Big]^r\frac{d\lambda}{\lambda}
\leq m(\mathcal{B})^{\frac{r}{q}}\frac{q}{\,r\,}\cdot\Lambda^r
+\|f\|_{L^{p,\infty}(\mathcal{B})}^{\frac{pr}{q}}\frac{q^2}{r(p-q)}\cdot\Lambda^{r-\frac{rp}{q}}.
\end{equation*}
We now choose $\Lambda>0$ so that
\begin{equation*}
m(\mathcal{B})^{\frac{r}{q}}\cdot\Lambda^r=\|f\|_{L^{p,\infty}(\mathcal{B})}^{\frac{pr}{q}}\cdot\Lambda^{r-\frac{rp}{q}},
\end{equation*}
namely,
\begin{equation*}
\Lambda:=\frac{\|f\|_{L^{p,\infty}(\mathcal{B})}}{m(\mathcal{B})^{\frac{1}{p}}}.
\end{equation*}
Therefore,
\begin{equation*}
\begin{split}
q\int_0^{\infty}\Big[\lambda d_{f;\mathcal{B}}(\lambda)^{1/q}\Big]^r\frac{d\lambda}{\lambda}
&\leq \frac{q}{\,r\,}\Big(1+\frac{q}{p-q}\Big)\|f\|_{L^{p,\infty}(\mathcal{B})}^{\frac{pr}{q}}\cdot\Lambda^{r-\frac{rp}{q}}\\
&=\frac{q}{\,r\,}\cdot\frac{p}{p-q}
\cdot\frac{\|f\|^r_{L^{p,\infty}(\mathcal{B})}}{m(\mathcal{B})^{\frac{r}{p}-\frac{r}{q}}}.
\end{split}
\end{equation*}
This, together with the definition of $LM^{p,\infty;\kappa_{*}}(\mathbb R^n)$, implies that
\begin{equation*}
\begin{split}
\bigg(q\int_0^{\infty}\Big[\lambda d_{f;\mathcal{B}}(\lambda)^{1/q}\Big]^r\frac{d\lambda}{\lambda}\bigg)^{\frac{1}{r}}
&\leq\Big[\frac{pq}{r(p-q)}\Big]^{\frac{1}{r}}
\cdot\frac{\|f\|_{L^{p,\infty}(\mathcal{B})}}{m(\mathcal{B})^{\frac{1}{p}-\frac{1}{q}}}\\
&\leq\Big[\frac{pq}{r(p-q)}\Big]^{\frac{1}{r}}
\frac{m(\mathcal{B})^{\frac{\kappa_{*}}{p}}}{m(\mathcal{B})^{\frac{1}{p}-\frac{1}{q}}}\|f\|_{LM^{p,\infty;\kappa_{*}}}.
\end{split}
\end{equation*}
Finally, by taking the supremum over all balls $\mathcal{B}\subset\mathbb R^n$ and using Proposition \ref{prop24}, we obtain the desired estimate \eqref{desiredes}, whenever
\begin{equation*}
\kappa_*=1-\frac{(1-\kappa)p}{q}\Longleftrightarrow(1-\kappa)p=(1-\kappa_*)q.
\end{equation*}  
To summarize, we have the following inclusion relations
\begin{equation*}
LM^{p,r;\kappa_{*}}(\mathbb R^n)\subset LM^{p,\infty;\kappa_{*}}(\mathbb R^n)\subset LM^{q,r;\kappa}(\mathbb R^n),
\end{equation*}
provided that $1\leq q<p<\infty$, $1\leq r<\infty$, $0<\kappa_{*},\kappa<1$ and $(1-\kappa)p=(1-\kappa_*)q$. Moreover, the size of the embedding constant from $LM^{p,\infty;\kappa_{*}}(\mathbb R^n)$ into $LM^{q,r;\kappa}(\mathbb R^n)$ is specified, the difference between some Lorentz--Morrey spaces is clarified. In particular, when $r=q$,
\begin{equation*}
M^{p;\kappa_{*}}(\mathbb R^n)\subset WM^{p;\kappa_{*}}(\mathbb R^n)\subset M^{q;\kappa}(\mathbb R^n),
\end{equation*}
provided that $1\leq q<p<\infty$, $0<\kappa_{*},\kappa<1$ and $(1-\kappa)p=(1-\kappa_*)q$. Moreover,
\begin{equation*}
\|f\|_{M^{q;\kappa}}\leq \Big[\frac{p}{p-q}\Big]^{\frac{1}{q}}\|f\|_{WM^{p;\kappa_{*}}}.
\end{equation*}

\end{document}